\newtheorem{theorem}{Theorem}[section]
\newtheorem{proposition}[theorem]{Proposition}
\newtheorem{corollary}[theorem]{Corollary}
\theoremstyle{definition}
\newtheorem{definition}[theorem]{Definition}
\newtheorem{remark}[theorem]{Remark}
\def\r{\mathbb R}\def\s{\mathbb S}
\def\H{\mathsf H}
\def\h{\mathbb H}
\begin{document}

\title{Horo-shrinkers in the hyperbolic space}
\author{Antonio Bueno, Rafael L\'opez}
\address{ Departamento de Ciencias\\  Centro Universitario de la Defensa de San Javier. 30729 Santiago de la Ribera, Spain}
\email{antonio.bueno@cud.upct.es}
\address{ Departamento de Geometr\'{\i}a y Topolog\'{\i}a\\  Universidad de Granada. 18071 Granada, Spain}
\email{rcamino@ugr.es}
\begin{abstract}
A surface $\Sigma$ in the hyperbolic space $\h^3$ is called a horo-shrinker if its mean curvature $H$ satisfies $H=\langle N,\partial_z\rangle$, where $(x,y,z)$ are the coordinates of $\h^3$ in the upper half-space model and $N$ is the unit normal of $\Sigma$. In this paper we study horo-shrinkers invariant by one-parameter groups of isometries of $\h^3$ depending if these isometries are hyperbolic, parabolic or spherical. We characterize totally geodesic planes as the only horo-shrinkers invariant by a one-parameter group of hyperbolic translations. The grim reapers are defined as the horo-shrinkers invariant by a one-parameter group of parabolic translations. We describe the geometry of the grim reapers proving that   they are periodic surfaces. In the last part of the paper,  we give a complete classification of horo-shrinkers invariant by spherical rotations, distinguishing if the surfaces intersect or not the rotation axis.
\end{abstract}


\subjclass{Primary 53E10; Secondary 53C44, 53A10, 53C21, 53C42.}

\keywords{hyperbolic space, mean curvature flow, conformal vector field, grim reaper,  spherical rotation}
\maketitle

\section{Introduction }\label{sec1}

The theory of the mean curvature flow (MCF for short) is an area of great activity in geometric analysis in the last decades: see, for example, the surveys \cite{co,ec,ma} and references therein. In Euclidean space $\r^3$, let be $\Sigma$ an oriented smooth surface and $\Psi\colon\Sigma\to\r^3$ an isometric immersion. A MCF for $\Psi$ is a smooth   family of immersions $\{\Psi_t:\Sigma\to\r^3: t\in [0,T)\}$ satisfying 
$$\left\{\begin{array}{ll}
\displaystyle\frac{\partial\Psi_t}{\partial t}  &=H(\Psi_t)N(\Phi_t),\\
\Psi_0&=\Psi,
\end{array}\right.$$
where  $H(\Phi_t)$ and $N(\Phi_t)$ are the mean curvature and the unit normal of $\Phi_t$ respectively.  Solutions of the MCF develop singularities at finite time, which may   cause a change in the topology of the surface. There are two types of singularities. Translators of the MCF (also called translating solitons) appear as the equation of the limit flow by a blow-up procedure near type II singularities, according to Huisken and Sinestrari \cite{hs}. In $\r^3$, a translator is a surface $\Sigma$ characterized by the equation $H=\langle N,\vec{v}\rangle$, where $H$ and $N$ are the mean curvature and the unit normal of $\Sigma$, respectively, and $\vec{v}$ is a direction of the ambient space. This direction $\vec{v}$ indicates that the shape of $\Sigma$ does not change  during the evolution because $\Sigma$  is translated   by the MCF  at constant velocity.

Huisken initiated the study of the MCF in general Riemannian manifolds \cite{hu}. When the ambient space is the hyperbolic space, pioneering research on the MCF is  \cite{cm1,cm2}; see also \cite{an,aw,cm3,ge}. Singularities of MCF have been less studied \cite{ji,wa}. Nevertheless, the same notion of translator can be defined by replacing $\vec{v}\in\r^3$ by a Killing vector field $X\in\mathfrak{X}(\h^3)$ whose flow of isometries consists of translations of $\h^3$. A surface $\Sigma\subset\h^3$ is a translator   with respect to $X$ if $H=\langle N,X\rangle$. In the hyperbolic space $\h^3$ there are two types of translations \cite{dd}. Parabolic translations   are isometries  of $\h^3$ that  fix one point of the ideal boundary $\h^3_\infty$. Hyperbolic translations are  isometries of $\h^3$ that fix two points of $\h^3_\infty$. The corresponding translators have been recently studied in \cite{bl} and \cite{li}, respectively.

 Besides Killing vector fields,   another vector fields of $\h^3$   of special relevance are the   conformal vector fields.   
  In order to give explicit example of such vector fields,  we will use   the upper half-space model of $\h^3$, that is, $(\r^3_{+},\langle\cdot,\cdot\rangle)$, where $\r_+^3=\{(x,y,z)\in\r^3:z>0\}$, $\langle\cdot,\cdot\rangle$ is  the hyperbolic metric 
$$ \langle\cdot,\cdot\rangle= \frac{1}{z^2}\langle\cdot,\cdot\rangle_e,$$
and $\langle\cdot,\cdot\rangle_e=dx^2+dy^2+dz^2$ is the Euclidean metric of $\r^3$. In this model, the vector field $\partial_z\in \mathfrak{X}(\h^3)$ is a conformal vector field because its Lie derivative is $\mathcal{L}_{\partial_z}\langle,\rangle=-\frac{2}{z}\langle,\rangle$. Motivated by the notion of translator in $\h^3$, we give the following definition. 

\begin{definition} A horo-shrinker in $\h^3$ is an isometric immersion $\Psi\colon\Sigma\to\h^3$ of an oriented smooth surface $\Sigma$ whose mean curvature $H$ satisfies 
\begin{equation}\label{eq1}
H=\langle N,\partial_z\rangle,
\end{equation}
where $N$ is the unit normal of $\Sigma$.
\end{definition}

It was in \cite{alr} where the authors proposed the study of self-similar solutions of the mean curvature flow in the presence of a conformal  vector field. More precisely, the conformal vector field $\partial_z$ corresponds with the vector field $e^t\partial_t$  in Example 3.3 of \cite{alr} when the hyperbolic space $\h^3$ is viewed as the warped product $\r\times_{e^t}\r^2$.   This was motivated by the examples of self-shrinkers of the mean curvature flow in $\r^3$, where the vector field  is the position vector field.    In fact, horo-shrinkers  are the analogues of the self-shrinkers  of the MCF theory in Euclidean space $\r^3$ \cite{co,ec,ma}. Instead the vector field $\partial_z$, in \cite{mr} the authors considered the conformal vector field $-\partial_z$ and the corresponding solutions of \eqref{eq1} are called horo-expanders. In $\r^3$,   self-shrinkers and self-expanders are surfaces which move by homotheties (contractions or expansions, respectively) when they evolve by the MCF.  In contrast to the translators of $\h^3$, the shape of   horo-shrinkers and horo-expanders is not  preserved along the MCF.      Another reason to consider surfaces satisfying \eqref{eq1} is because of its formal similarity with Ricci solitons \cite{lo,ma}.

A first  observation is that horo-shrinkers  and horo-expanders  are minimal surfaces in the sense of Ilmanen \cite{il}.   Specifically, if we define the function $\phi(x,y,z)=-2/z$, then a minimal surface for the conformal metric $e^\phi \langle,\rangle$ is characterized by Eq. \eqref{eq1}, that is, the surface is a   horo-shrinker. In case of horo-expanders the function is   $\phi(x,y,z)=2/z$. Recall that being minimal in a conformal space is a property that also fulfill self-shrinkers and self-expanders of $\r^3$. However, and in contrast to the theory of the MCF in Euclidean space, it is unknown if the translators of $\h^3$ determined by parabolic and hyperbolic translations and described in \cite{bl,li} are minimal surfaces in the sense of Ilmanen. 

The purpose of  this paper is to investigate the horo-shrinkers. To be precise, we are interested   in those horo-shrinkers that are invariant by a one-parameter group of isometries of $\h^3$.  We detail the organization of the paper and highlight some of the main results. In   Section \ref{sec2} we show the first examples of horo-shrinkers, such as vertical planes (totally geodesic planes) and the horosphere $\H_1$ of equation $z=1$. Taking these examples as comparison surfaces and by the tangency principle, we prove that   there are no closed horo-shrinkers. We also classify in Thm.  \ref{t1} all   horo-shrinkers invariant by hyperbolic translations. In Section \ref{sec3} we define the grim reapers as those horo-shrinkers invariant by a one-parameter group of parabolic translations. The classification of the grim reapers is given in Thm.  \ref{t2}, being these surfaces   vertical planes, the horosphere $\H_1$, and a one-parameter family of periodic surfaces along a horizontal direction orthogonal to the parabolic translations. As a consequence of the properties of the grim reapers, we will prove in Thm.  \ref{t22} that there are no solutions of the Dirichlet problem at infinity associated to the non-parametric equation for \eqref{eq1}.

Section \ref{sec4}  is devoted to rotational horo-shrinkers of spherical type. We distinguish if the surfaces intersect or not the rotation axis. In the first case, the existence of these surfaces is not a direct consequence of standard theory, since the ODE fulfilled is degenerated when the surface intersects  the rotation axis. In Thm.  \ref{t3} we prove such existence using Banach's fixed point theorem. In Thm.  \ref{t4} we prove  that they are parametrized by one parameter, namely,  the initial height at which they intersect the rotation axis. We also prove that these surfaces oscillate around $\H_1$ converging to it. Finally, in Thm.  \ref{t5} we describe the spherical rotational horo-shrinkers that do not intersect the rotation axis. These surfaces are parametrized by two parameters and they  oscillate around $\H_1$.

\section{Preliminaries}\label{sec2}

In this paper we use the upper half-space model of $\h^3$. We will  employ the terminology  parallel  in the Euclidean sense and  by  vertical  and   horizontal  we mean to be  parallel to the $z$-axis or parallel to the $xy$-plane, respectively. The ideal boundary $\h_\infty^3$  of $\h^3$ is the one-point compactification of the plane $z=0$.  We show some explicit examples of horo-shrinkers.

\begin{enumerate}
\item Vertical totally geodesic planes. These surfaces are minimal ($H=0$) and the  unit normal $N$ is orthogonal to $\partial_z$. 
\item The horosphere of equation $z=1$. This horosphere will be denoted by $\H_1$. In general,  horospheres in the upper half-space model can be viewed as horizontal planes of equation $z=c$, $c>0$. The mean curvature is $H=1$ with the orientation $N=c\partial_z$. Then $\langle N,\partial_z\rangle=1/c$ and consequently, the only horosphere $z=c$ satisfying \eqref{eq1} is when $c=1$.
\end{enumerate}

Next, we express the condition of being a horo-shrinker in a non-parametric way. For this,  we will use a  relation between the hyperbolic mean curvature $H$ of a surface $\Sigma$ in $\h^3$ and its Euclidean mean curvature $H_e$ when $\Sigma$ is regarded as a surface in $(\r^3_+,\langle,\rangle_e)$.  This relation is given by  
\begin{equation}\label{mean}
H(x,y,z)=zH_e(x,y,z)+(N^e)_3(x,y,z),\quad (x,y,z)\in\Sigma,
\end{equation}
where $N^e$ is the Euclidean unit normal   of $\Sigma$ and the subindex $(\cdot)_3$ denotes the third coordinate of the vector. From the viewpoint of PDE theory, equation \eqref{eq1} is of second order and  elliptic. Indeed, if  $\Sigma$ is locally expressed as $z=u(x,y)$, in virtue of \eqref{mean}, equation \eqref{eq1} writes as 
\begin{equation}\label{div}
\mbox{div}\left(\frac{Du}{\sqrt{1+|Du|^2}}\right)=\frac{2}{ \sqrt{1+|Du|^2}}\frac{1-u}{u^2}.
\end{equation}
This elliptic equation is of quasilinear  type.  As stated in the introduction,  horo-shrinkers are minimal surfaces in the conformal space $(\h^3,e^{-2/z} \langle,\rangle)$. This allows to use the tangency principle similarly as for minimal surfaces of $\r^3$. This implies that   if two horo-shrinkers have a common tangent point and one horo-shrinker locally lies at one side of the other around that point, then both horo-shrinkers agree in an open set. A first consequence of the tangency principle is a certain control of the  height of the points of a horo-shrinker that are critical points of height function.   

\begin{proposition}\label{mm}
 Let $\Sigma$ be a horo-shrinker,  $\Sigma\not=\H_1$. If $p\in\Sigma$ is a local maximum (resp. minimum) of the function $z\colon\Sigma\to\r$, then $z(p)>1$ (resp. $z(p)<1$).
\end{proposition}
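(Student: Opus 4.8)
The plan is to work with the non-parametric formulation \eqref{div}, combining the classical maximum principle for the weak inequality with the tangency principle for the strict one. Let $p\in\Sigma$ be a local maximum of the height function $z|_\Sigma$. Since $p$ is a critical point of $z|_\Sigma$, the tangent plane $T_p\Sigma$ is horizontal, so a neighbourhood of $p$ in $\Sigma$ can be written as a vertical graph $z=u(x,y)$ over $T_p\Sigma$, the projection $p_0$ of $p$ satisfying $Du(p_0)=0$. Because the horo-shrinker equation \eqref{eq1} is invariant under reversal of $N$, this local graph satisfies \eqref{div}, whatever the chosen orientation.

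First I would extract the non-strict inequality. Evaluating \eqref{div} at the critical point $p_0$, where $Du=0$ and hence $\sqrt{1+|Du|^2}=1$, the left-hand side reduces to $\Delta u(p_0)$ and the right-hand side to $2\,(1-u(p_0))/u(p_0)^2$. As $p$ is a local maximum, $u$ attains a local maximum at $p_0$, so the Hessian $D^2u(p_0)$ is negative semidefinite and therefore $\Delta u(p_0)\le 0$. Since $u(p_0)^2>0$, this forces $1-u(p_0)\le 0$, that is $z(p)=u(p_0)\ge 1$. The minimum case is identical after reversing every inequality: at a local minimum $D^2u(p_0)\ge 0$ gives $\Delta u(p_0)\ge 0$, whence $u(p_0)\le 1$ and $z(p)\le 1$.

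The main point, and the only place where the hypothesis $\Sigma\neq\H_1$ enters, is upgrading $\ge$ to $>$. Suppose $z(p)=1$ at a local maximum. Then $\Sigma$ and the horosphere $\H_1=\{z=1\}$ are both horo-shrinkers, they are tangent at $p$, and near $p$ the surface $\Sigma$ lies in the region $\{z\le 1\}$, i.e. locally on one side of $\H_1$. The tangency principle then yields that $\Sigma$ and $\H_1$ coincide on an open set; by connectedness of $\Sigma$ (equivalently, by unique continuation for the elliptic equation \eqref{div}) this propagates to $\Sigma=\H_1$, contradicting the assumption. Hence $z(p)>1$, and symmetrically $z(p)<1$ at a local minimum.

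I expect the only delicate step to be this last one: one must check that the one-sided tangential contact between $\Sigma$ and $\H_1$ meets the hypotheses of the tangency principle and that local agreement indeed forces global agreement, which is where connectedness of $\Sigma$ is used. The maximum-principle computation giving the weak inequality is routine once \eqref{div} is in hand; I would only be careful to take the local graph with the orientation for which \eqref{div} was derived, a point rendered harmless by the orientation-invariance of \eqref{eq1}.
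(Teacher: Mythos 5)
Your proof is correct and follows essentially the same route as the paper: the weak inequality $z(p)\geq 1$ comes from evaluating \eqref{div} at the critical point, where it reduces to $\Delta u = 2(1-u)/u^2$, and the strict inequality comes from ruling out $z(p)=1$ via the tangency principle applied to $\Sigma$ and $\H_1$. Your additional remarks on orientation-invariance of \eqref{eq1} and on connectedness being needed to propagate the local coincidence to $\Sigma=\H_1$ are sound points of care that the paper leaves implicit.
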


\begin{proof} Let $p=(x_0,y_,z_0)$ be a local maximum of the function $z$. Since $Du(x_0,y_0)=(0,0)$, Eq. \eqref{div} becomes simply
$$\Delta u(x_0,y_0)=2\frac{1-z_0}{z_0^2},$$
where $\Delta$ is the Euclidean Laplacian of $\r^2$. Thus $\Delta u(x_0,y_0)\leq 0$ implies $z_0\geq 1$. We prove that, in fact, $z_0>1$. On the contrary, if $z_0=1$, then the horo-shrinker $\Sigma$ lies in one side of $\H_1$ in an open set of $p$. Then  the tangency principle would imply $\Sigma\subset\H_1$, which it is a contradiction. In case that $p$ is a local minimum, the arguments are analogous.
\end{proof}

A second  consequence of the tangency principle is the following result.

\begin{proposition} There are no closed (compact without boundary) horo-shrinkers. 
\end{proposition}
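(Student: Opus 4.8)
The plan is to argue by contradiction using a sweep by vertical totally geodesic planes, which are themselves horo-shrinkers, together with the tangency principle. Suppose $\Sigma$ is a compact horo-shrinker without boundary; we may assume $\Sigma$ is connected, arguing componentwise otherwise. Being a compact subset of $\r^3_+$, the surface $\Sigma$ is bounded in the Euclidean sense, so the coordinate function $x\colon\Sigma\to\r$ attains a finite maximum $c^\ast=\max_\Sigma x$.

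Next I would introduce the comparison family. For each $c\in\r$, let $P_c$ denote the vertical totally geodesic plane $\{x=c\}$. As observed among the examples of Section \ref{sec2}, every such plane is a horo-shrinker, since its unit normal is horizontal and hence orthogonal to $\partial_z$, so that $H=\langle N,\partial_z\rangle=0$. For $c>c^\ast$ the plane $P_c$ is disjoint from $\Sigma$ and $\Sigma$ lies entirely in the half-space $\{x<c\}$; letting $c$ decrease to $c^\ast$, the plane $P_{c^\ast}$ touches $\Sigma$ for the first time at a point $q$ where $x$ is maximal on $\Sigma$. At such a point the differential of $x|_\Sigma$ vanishes, so $\partial_x$ is orthogonal to $T_q\Sigma$; hence the unit normal of $\Sigma$ at $q$ is parallel to $\partial_x$, which is precisely the normal direction of $P_{c^\ast}$. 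Thus $\Sigma$ and $P_{c^\ast}$ are tangent at $q$, and near $q$ the surface $\Sigma$ lies on one side of $P_{c^\ast}$, namely in $\{x\le c^\ast\}$.

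Finally I would invoke the tangency principle. Both $\Sigma$ and $P_{c^\ast}$ are horo-shrinkers sharing the tangent point $q$, with $\Sigma$ lying locally to one side of $P_{c^\ast}$; the tangency principle then forces the two surfaces to coincide on an open neighborhood of $q$. By unique continuation together with the connectedness of $\Sigma$, this local coincidence propagates to give $\Sigma\subseteq P_{c^\ast}$. But $P_{c^\ast}$ is a non-compact totally geodesic plane, so a connected compact surface without boundary cannot be an open piece of it. This contradiction shows that no compact horo-shrinker exists.

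The delicate point is the tangency step: one must ensure that the first-contact point $q$ is an interior point of $\Sigma$ (guaranteed since $\Sigma$ has empty boundary) and that the contact is genuinely tangential and one-sided, so that the geometric maximum principle for conformal-minimal surfaces applies. The remaining ingredient, upgrading the open coincidence set to all of $\Sigma$, is the standard unique-continuation/connectedness argument for solutions of the elliptic equation \eqref{div}; alternatively, one notes that the subset of points of $\Sigma$ agreeing with $P_{c^\ast}$ is nonempty, open, and closed, hence all of $\Sigma$.
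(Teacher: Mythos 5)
Your proof is correct and follows essentially the same strategy as the paper: sweep a vertical totally geodesic plane (itself a horo-shrinker) toward the compact surface until a first contact point, apply the tangency principle there, and derive a contradiction from $\Sigma\subset P_{c^\ast}$. Your write-up merely makes explicit some details (tangency at the maximum of $x$, the open-closed/unique-continuation step, non-compactness of the plane) that the paper leaves implicit.
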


\begin{proof} Arguing by contradiction, assume that $\Sigma$ is a closed horo-shrinker. Take $\Pi$   a vertical plane of $\r^3_{+}$  that does not intersect $\Sigma$. We move $\Pi$ towards $\Sigma$ until we arrive to a first contact point between both surfaces. Since $\Pi$ is a horo-shrinker, the tangency principle asserts that $\Sigma$ and $\Pi$ agree in the largest neighborhood of both surfaces containing the tangency point. This implies that $\Sigma\subset\Pi$, which is a contradiction.
\end{proof}

We finish this section giving the classification of horo-shrinkers invariant by a one-parameter group $\mathcal{H}$ of hyperbolic translations. A hyperbolic translation  of $\h^3$   is an isometry that leaves fixed two points of the ideal boundary $\h_\infty^3$.   In the upper-halfspace model of $\h^3$, and after an isometry, we can assume that these two points are the origin $O$ of $\r^3$ and the infinity. Then   a hyperbolic translation is an   Euclidean  homothety from $O$ and the corresponding  group  is  $\mathcal{H}=\{(x,y,z)\mapsto t(x,y,z):t>0\}$. In particular,  a surface invariant by  $\mathcal{H}$  can be viewed as  a radial graph on the hemisphere $\s^2_{+}=\{(x,y,z)\in\r^3_+:x^2+y^2+z^2=1\}$.

\begin{theorem}\label{t1} Let $\mathcal{H}$ be a one-parameter group of hyperbolic translations fixing two points $p,q\in\h^3_\infty$. Then, totally geodesic planes containing both points are   the only horo-shrinkers invariant by the group $\mathcal{H}$.
\end{theorem}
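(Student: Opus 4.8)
The plan is to work in the normalized configuration set up above, where $\mathcal{H}=\{F_t:(x,y,z)\mapsto t(x,y,z):t>0\}$ consists of Euclidean homotheties from $O$, with fixed ideal points $O$ and $\infty$. These $F_t$ are isometries of $\h^3$, and an $\mathcal{H}$-invariant surface is exactly a Euclidean cone with vertex $O$, i.e.\ a cone $\Psi(r,s)=r\,\gamma(s)$ over a profile curve $\gamma$ in $\s^2_{+}$. Before anything else I would record the two conversions between hyperbolic and Euclidean data that drive the proof: Eq.\ \eqref{mean}, namely $H=zH_e+(N^e)_3$, and the companion identity $\langle N,\partial_z\rangle=(N^e)_3/z$, which follows from $N=zN^e$ together with the conformal factor $1/z^2$ (the horosphere $\H_1$ checks this, giving $1/c$ exactly as in Section~\ref{sec2}).

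The conceptual heart is a scaling argument. Since each $F_t$ is an isometry and, for a connected $\Sigma$, the chosen unit normal varies continuously with $t$ starting from the identity at $t=1$, the mean curvature is invariant: $H\circ F_t=H$. By contrast $\partial_z$ is only conformal, and a direct computation gives $(F_t)_*\partial_z=t\,\partial_z$ (equivalently $[X,\partial_z]=-\partial_z$ for the generator $X=x\partial_x+y\partial_y+z\partial_z$). Because $F_t$ preserves the metric, this yields $\langle N,\partial_z\rangle\circ F_t=\tfrac1t\langle N,\partial_z\rangle$. Feeding both facts into the horo-shrinker equation \eqref{eq1} gives, at every $p\in\Sigma$ and every $t>0$,
\[
H(p)=H(F_t(p))=\langle N,\partial_z\rangle(F_t(p))=\tfrac1t\,\langle N,\partial_z\rangle(p)=\tfrac1t\,H(p),
\]
so $H\equiv 0$, and hence $\langle N,\partial_z\rangle\equiv 0$, on all of $\Sigma$.

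It then remains to identify the surfaces with $H\equiv 0$ and $\langle N,\partial_z\rangle\equiv 0$. From $\langle N,\partial_z\rangle=(N^e)_3/z\equiv 0$ and $z>0$ I get $(N^e)_3\equiv 0$, so $\partial_z$ is everywhere tangent to $\Sigma$; thus, Euclidean-wise, $\Sigma$ is a vertical cylinder over a planar curve $\alpha$ in the $xy$-plane. Substituting $(N^e)_3=0$ into \eqref{mean} with $H=0$ forces $H_e=0$, i.e.\ $\alpha$ has vanishing curvature and is a straight line; invariance under $\mathcal{H}$ forces this line through $O$. Hence $\Sigma$ is a vertical plane through $O$, which is precisely a totally geodesic plane containing the two fixed ideal points $O$ and $\infty$. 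The converse is immediate: such planes are the vertical totally geodesic planes of Example (1), for which $H=0=\langle N,\partial_z\rangle$, and they are manifestly $\mathcal{H}$-invariant.

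A purely parametric alternative, closer to the radial-graph viewpoint, is to insert the cone $\Psi(r,s)=r\,\gamma(s)$, with $\gamma$ unit-speed in $\s^2_{+}$, directly into $H_e=(N^e)_3(1-z)/z^2$. Using $z=r\gamma_3$, $N^e=\gamma\times\gamma'$ (so $(N^e)_3=(\gamma\times\gamma')_3$ is independent of $r$) and $H_e=\kappa_g/(2r)$, one obtains $\kappa_g=2(\gamma\times\gamma')_3\,(1-r\gamma_3)/(r\gamma_3^2)$; the left side is independent of $r$ while the right side is not, unless $(\gamma\times\gamma')_3\equiv 0$, whence also $\kappa_g\equiv 0$. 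Thus $\gamma$ is an arc of a great circle lying in a vertical plane through $O$, and the cone over it is that plane. The main obstacle is the bookkeeping behind the two highlighted facts: justifying the scaling $(F_t)_*\partial_z=t\,\partial_z$ together with the invariance of $H$ (the coorientation issue), and, in the parametric route, correctly extracting the two pointwise conditions from the $r$-dependence and verifying that the great-circle conclusion reproduces exactly the geodesic planes through $O$ and $\infty$.
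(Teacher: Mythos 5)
Your proposal is correct, and your primary argument takes a genuinely different route from the paper's. The paper's proof is essentially the one you sketch as your ``parametric alternative'': it writes the invariant surface as the cone $\Psi(s,t)=t\alpha(s)$ over a unit-speed curve $\alpha$ in $\s^2_{+}$, substitutes into \eqref{eq1}, and observes that the resulting identity is polynomial in $t$, forcing the coefficients $\langle\alpha'\times\alpha,\alpha''\rangle_e$ and $(\alpha'\times\alpha)_3$ to vanish separately; the first makes $\alpha$ a great circle, the second places it in a vertical plane through $O$. Your main argument instead extracts the conceptual content behind that polynomial splitting: each $F_t\in\mathcal{H}$ is an isometry preserving $\Sigma$ and (by connectedness and continuity in $t$) its coorientation, so $H\circ F_t=H$, while $\partial_z$ is only conformal and scales as $(F_t)_*\partial_z=t\,\partial_z$, whence $\langle N,\partial_z\rangle\circ F_t=\tfrac1t\langle N,\partial_z\rangle$; evaluating \eqref{eq1} along the orbit of any point gives $H\equiv 0$ and $\langle N,\partial_z\rangle\equiv 0$, after which \eqref{mean} identifies $\Sigma$ as a vertical plane and invariance puts it through $O$. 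Both arguments are complete; your implicit connectedness assumption is harmless, since the connected group $\mathcal{H}$ preserves each component of $\Sigma$, and the paper's cone parametrization makes the same tacit reduction. What your scaling argument buys is that it is coordinate-free, needs no curvature computation to reach $H\equiv 0$, and generalizes verbatim: the sign of the right-hand side is irrelevant, so it also classifies $\mathcal{H}$-invariant horo-expanders (recovering the paper's remark following the theorem), and it works in any dimension and for any equation $H=\langle N,X\rangle$ in which $X$ rescales with a nonzero homogeneous weight under a group of ambient isometries preserving the surface. What the paper's computation buys is uniformity with its overall method, in which each invariance class is reduced to an explicit ODE system that is then analyzed directly.
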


\begin{proof} 
Without loss of generality, we can assume that $\mathcal{H}$ is the group Euclidean homotheties from the origin $O\in\r^3$. Let $\Sigma$ be a horo-shrinker invariant by $\mathcal{H}$.   Since $\Sigma$ is a radial graph on some domain of $\s^2_+$, a parametrization of $\Sigma$ is
$$\Psi(s,t)=t\alpha(s),\quad s\in I\subset\r,\ t\in\r,$$
where $\alpha\colon I\to\s^2_{+}$ is a curve parametrized by the   Euclidean arc-length. Then $|\alpha(s)|_e=|\alpha'(s)|_e=1$ for all $s\in I$.  The Euclidean mean curvature $H_e$ and the Euclidean unit vector $N^e$ are
$$H_e=\frac12\frac{\langle\alpha'\times \alpha,\alpha''\rangle_e}{t},\quad N^e=\alpha'\times\alpha.$$
Then, \eqref{eq1} is 
$$2\alpha_3 \langle\alpha'\times \alpha,\alpha''\rangle_e +(\alpha'\times\alpha)_3=\frac{\langle\alpha'\times \alpha,\partial_z\rangle_e}{t\alpha_3}=\frac{(\alpha'\times\alpha)_3}{t\alpha_3},$$
where the last equality is because   $\langle\alpha'\times \alpha,\partial_z\rangle_e=(\alpha'\times\alpha)_3$. If we write this equation as 
$$t\alpha_3\left(2\alpha_3 \langle\alpha'\times \alpha,\alpha''\rangle_e +(\alpha'\times\alpha)_3\right)-(\alpha'\times\alpha)_3=0,$$
we have a polynomial equation on $t$. Thus, we deduce $\langle\alpha'\times \alpha,\alpha''\rangle_e=0$ and $(\alpha'\times\alpha)_3=0$ because $\alpha_3\not=0$. Since $\langle\alpha'\times \alpha,\alpha''\rangle_e=0$, and $\alpha$ is a unit speed curve in $\s^2_{+}$, we have $\alpha''=-\alpha$. Thus $\alpha$ is an Euclidean geodesic of $\s^2_+$, that is, a great (hemi) circle of $\s^2_{+}$. Using that $(\alpha'\times\alpha)_3=0$, we deduce that $\alpha$ is included in a vertical plane through $O$. In consequence, $\Sigma$ is  a vertical plane  containing $O$. 
\end{proof}

\begin{remark}
Notice that the same proof is valid for horo-expanders, proving that totally geodesic planes (vertical planes) are the only horo-expanders that are invariant under hyperbolic translations.  This completes the classification given in \cite{mr} of all horo-expanders invariant by a one-parameter group of isometries of $\h^3$.
\end{remark}

 \section{The grim reapers}\label{sec3}

In this section we classify the horo-shrinkers invariant by a one-parameter group $\mathcal{P}$ of parabolic translations. A parabolic  translation of $\h^3$ is an isometry that leaves fixed one double point of the ideal boundary $\h_\infty^3$.  After an isometry of $\h^3$,  we can assume that this point is $\infty\in\h^3_\infty$ and thus a parabolic translation  is simply a horizontal Euclidean  translation. Then the group $\mathcal{P}$ is determined by a horizontal direction $(a,b,0)\in\r^3$ being $\mathcal{P}=\{(x,y,z)\mapsto (x,y,z)+t(a,b,0):t\in\r\}$.  Hence, a surface  invariant by $\mathcal{P}$ is    a ruled surface of $\r^3_{+}$ whose all rulings are horizontal straight-lines parallel to $(a,b,0)$. In analogy with the Euclidean context, we give the following definition.

\begin{definition} A grim reaper   is a horo-shrinker that is invariant by a one-parameter group of parabolic translations. 
\end{definition}

Let $\Sigma$ be a grim reaper. Without loss of generality, we can assume that the rulings of $\Sigma$ are parallel to the direction $(0,1,0)$. A parametrization of $\Sigma$ is 
\begin{equation}\label{pp}
\Psi(s,t)=(x(s),t,z(s)),\quad t\in\r, s\in I\subset\r, 
\end{equation}
where $\alpha(s)=(x(s),0,z(s))$ is a planar curve contained in the $xz$- plane. Suppose that $s$ is   the hyperbolic arc-length, that is $\langle\alpha'(s),\alpha'(s)\rangle=1$. This reads as
$$
\frac{1}{z(s)^2}(x'(s)^2+z'(s)^2)=1,
$$
hence there is a smooth function $\theta=\theta(s)$   such that
$$
x'(s)=z(s)\cos\theta(s),\qquad z'(s)=z(s)\sin\theta(s).
$$
The    unit normal   is   $N=zN^e=(-z',0,x')$.
The Euclidean mean curvature $H_e$ of $\Sigma$ is $H_e=\kappa/2$, where $\kappa$ is the Euclidean curvature of $\alpha$. Since $\kappa=\theta'/z$, then $H_e=\theta'/(2z)$. Using \eqref{mean}, we have  
$$
H=\frac{\theta'}{2}+\frac{x'}{z}.
$$
Since $\langle N,\partial_z\rangle=x'/z^2$, then \eqref{eq1} is 
\begin{equation}\label{eq2}
\frac{\theta'}{2}+\cos\theta=\frac{\cos\theta}{z}.
\end{equation}
Therefore,  the coordinate functions $x(s)$, $z(s)$ and $\theta(s)$ satisfy
\begin{equation}\label{eqs2}
\left\{
\begin{split}
x'(s)&=z(s)\cos\theta(s),\\
z'(s)&=z(s)\sin\theta(s),\\
\theta'(s)&=2\cos\theta(s)\frac{1-z(s)}{z(s)}.
\end{split}\right.
\end{equation}
Since the aim of this section is the geometric description of the grim reapers, we study the shape of the solution curves of \eqref{eqs2}.  First, we see that each solution of \eqref{eqs2} remains at a bounded distance to the plane $z=0$.

\begin{proposition}\label{proporbitaborde}
Let $(x(s),z(s),\theta(s))$ be a solution to \eqref{eqs2}. Then there is $\delta>0$ such that $z(s)\geq\delta$ for all $s\in I$.  
\end{proposition}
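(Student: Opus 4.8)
The plan is to integrate the planar subsystem in the variables $(z,\theta)$ — the equation for $x$ in \eqref{eqs2} decouples and is irrelevant for the bound — and then to read off the estimate for $z$ from an explicit first integral together with the boundedness of $\cos\theta$.

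First I would eliminate the parameter $s$. Away from the locus $\cos\theta=0$, dividing the last two equations of \eqref{eqs2} gives the separable ODE
$$\frac{dz}{d\theta}=\frac{z\sin\theta}{2\cos\theta\,(1-z)/z}=\frac{z^2\sin\theta}{2(1-z)\cos\theta},$$
so that
$$\frac{1-z}{z^2}\,dz=\frac{\sin\theta}{2\cos\theta}\,d\theta.$$
Integrating both sides produces the first integral
$$F(z,\theta):=-\frac1z-\ln z+\tfrac12\ln|\cos\theta|,$$
which one checks directly is constant along every solution of \eqref{eqs2} with $\cos\theta\neq0$: differentiating $F(z(s),\theta(s))$ and substituting \eqref{eqs2} makes the two surviving terms cancel.

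The bound then follows from the behaviour of the single-variable function $g(z)=\frac1z+\ln z$. Fix an initial point with $\cos\theta_0\neq0$ and set $C:=F(z_0,\theta_0)$, a finite number. Since $|\cos\theta|\le1$, the relation $F\equiv C$ yields
$$\frac1z+\ln z=\tfrac12\ln|\cos\theta|-C\le -C.$$
Now $g'(z)=(z-1)/z^2$, so $g$ has a global minimum $g(1)=1$ and $g(z)\to+\infty$ as $z\to0^+$; therefore the sublevel set $\{g\le -C\}$ is a compact subinterval $[\delta,\delta']\subset(0,\infty)$ with $\delta>0$. Consequently $z(s)\in[\delta,\delta']$, and in particular $z(s)\ge\delta$, for every $s$ in the maximal interval. (The upper bound $\delta'$ comes for free and foreshadows the oscillation around $\H_1$ exploited in later results.)

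The step requiring care — and the main obstacle — is the degeneracy of $F$ where $\cos\theta=0$, which a priori could let the orbit escape the region controlled above. I would dispose of it by a uniqueness argument that also furnishes a clean dichotomy. The lines $\theta\equiv\pm\pi/2$ are invariant for \eqref{eqs2}, since there $\theta'=2\cos\theta\,(1-z)/z=0$; on them $x$ is constant and $z=z_0e^{\pm s}$, which is precisely a vertical totally geodesic plane. Hence either $\cos\theta\equiv0$ — the planar case, for which $z=z_0e^{\pm s}$ admits no positive lower bound and the statement is to be read as excluding it — or $\cos\theta_0\neq0$; in the latter case $F\equiv C$ forces $|\cos\theta|\ge e^{2(C+1)}>0$ (using $g\ge1$), so the orbit never reaches $\cos\theta=0$, the first integral remains valid on all of $I$, and the estimate above applies verbatim.
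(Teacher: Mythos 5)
Your proof is correct and follows essentially the same route as the paper: your first integral $F(z,\theta)=-\tfrac1z-\ln z+\tfrac12\ln|\cos\theta|$ is exactly the paper's conservation law \eqref{fi}, $\cos\theta=cz^2e^{2/z}$, written logarithmically, and in both arguments the lower bound on $z$ comes from the boundedness of $\cos\theta$ (the paper via a contradiction with $z(s_n)\to0$, you via compactness of the sublevel set of $g(z)=\tfrac1z+\ln z$). If anything, your treatment is more careful than the paper's: the paper's contradiction (the right-hand side of \eqref{fi} diverges as $z\to0$) tacitly assumes $c\neq0$, whereas you explicitly isolate the degenerate orbits $\theta\equiv\pm\pi/2$ — the vertical planes, with $z=z_0e^{\pm s}$ — for which the stated lower bound actually fails on $I=\mathbb{R}$ and which must therefore be read as excluded from the proposition.
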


\begin{proof}
Multiplying the last equation of \eqref{eqs2} by $\cos\theta\sin\theta$ and taking into account that $z'/z=\sin\theta$, we have 
$$\frac{\sin\theta(\sin\theta)'}{1-\sin^2\theta}=2\sin\theta\frac{1-z}{z}=2\frac{1-z}{z^2}z'.$$
Hence   we deduce that there exists $c\in\r$ such that 
\begin{equation}\label{fi}
\cos\theta=cz^2e^{2/z}.
\end{equation}
Now, arguing by contradiction, assume that there is a sequence $s_n\to s_1$ such that $z(s_n)\rightarrow0$ as $n\rightarrow \infty$, being $s_1$ either finite or infinite. Substituting in \eqref{fi} the right-hand side diverges to $\infty$, which it is not possible because the left-hand side is bounded.
\end{proof}
As a consequence, each solution of \eqref{eqs2} is defined in $\r$ because the functions on the right-hand side of \eqref{eqs2} are bounded. 

Recall that the function $x(s)$ does not explicitly appear in \eqref{eqs2}, but only its derivative. Geometrically this implies that any solution of \eqref{eqs2} remains a solution after a parabolic translation $(x,0,z)\mapsto (x+t,0,z)$. Consequently, in order to study the  properties of the solutions of \eqref{eqs2} it is enough  to consider the  nonlinear autonomous system
\begin{equation}\label{eqs}
\left(
\begin{array}{c}
z'\\
\theta'
\end{array}
\right)=
\left(
\begin{array}{c}
z\sin\theta\\
\displaystyle{2\cos\theta\frac{1-z}{z}}
\end{array}
\right),
\end{equation}
defined in the domain $ \{(z,\theta): z>0, \theta\in\mathbb{R}\}$. Indeed, if we fix $(x_0,z_0,\theta_0),\ z_0>0$, let $(z,\theta)$ be the unique solution to \eqref{eqs2} for the initial data $z(0)=z_0>0$, $\theta(0)=\theta_0$, and define $x$ as the solution to $x'=z\cos\theta,\ x(0)=x_0$. Then, $\alpha(s)=(x(s),0,z(s))$ is the generating curve of  a surface parametrized by \eqref{pp} that is a solution to \eqref{eq1}.

By periodicity of the trigonometric functions, we define the \emph{orbits} as the solutions $\gamma(s)=(z(s),\theta(s))$ of \eqref{eqs}, which are defined for $z>0$ and $\theta\in(-\pi,\pi)$. By uniqueness of the Cauchy problem, two different orbits cannot intersect, hence the $(z,\theta)$-domain  $(0,\infty)\times (-\pi,\pi)$ is foliated by all the orbits.

The following result exhibits that we can reduce the  study of the orbits essentially to $\theta\in(0,\pi/2)$. Its proof follows immediately, hence it is omitted.

\begin{proposition}\label{prfases}
The following properties hold:
\begin{enumerate}
\item If $\gamma(s)=(z(s),\theta(s))$ is an orbit, then $\overline{\gamma}(s)=(z(-s),-\theta(-s))$ is also an orbit. Consequently, every orbit $\gamma$ is symmetric with respect to the line $\theta=0$.
\item If $\gamma(s)=(z(s),\theta(s))$ is an orbit for $\theta\in(-\pi/2,\pi/2)$, then $\overline{\gamma}(s)=(z(s),-\theta(s)+\pi)$ is an orbit for $\theta\in(\pi/2,3\pi/2)$.
\end{enumerate}
\end{proposition}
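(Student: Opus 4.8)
The plan is to verify both claimed symmetries directly by substituting the proposed transformed curves into the autonomous system \eqref{eqs} and checking that they again solve it. Since the statement asserts that the proof is immediate, the work is purely computational: I would confirm that each transformation sends solutions to solutions, and then read off the geometric consequence about symmetry of orbits.

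For part (1), I would set $\overline{z}(s)=z(-s)$ and $\overline{\theta}(s)=-\theta(-s)$, and compute derivatives via the chain rule, picking up a factor of $-1$ from differentiating the argument $-s$. The key observation is that the right-hand side of \eqref{eqs} has a definite parity in $\theta$: the first component $z\sin\theta$ is odd in $\theta$ and the second component $2\cos\theta\,(1-z)/z$ is even in $\theta$. I would check that $\overline{z}'(s)=-z'(-s)=-z(-s)\sin\theta(-s)=\overline{z}(s)\sin\overline{\theta}(s)$ and that $\overline{\theta}'(s)=\theta'(-s)=2\cos\theta(-s)\frac{1-z(-s)}{z(-s)}=2\cos\overline{\theta}(s)\frac{1-\overline{z}(s)}{\overline{z}(s)}$, using that cosine is even so $\cos\overline{\theta}=\cos\theta(-s)$. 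The two minus signs cancel correctly in each equation, confirming $\overline{\gamma}$ is an orbit. For the symmetry conclusion, I would note that the orbit through an initial point $(z_0,0)$ on the line $\theta=0$ is mapped by this transformation to itself (since $\overline{\gamma}(0)=(z_0,0)$ as well), so by uniqueness of the Cauchy problem the orbit coincides with its reflection across $\theta=0$.

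For part (2), I would set $\overline{z}(s)=z(s)$ and $\overline{\theta}(s)=-\theta(s)+\pi$, with no time-reversal this time. Here the relevant identities are $\sin(-\theta+\pi)=\sin\theta$ and $\cos(-\theta+\pi)=-\cos\theta$. I would verify $\overline{z}'(s)=z'(s)=z(s)\sin\theta(s)=\overline{z}(s)\sin\overline{\theta}(s)$, which works because sine is preserved, and $\overline{\theta}'(s)=-\theta'(s)=-2\cos\theta(s)\frac{1-z(s)}{z(s)}=2\cos\overline{\theta}(s)\frac{1-\overline{z}(s)}{\overline{z}(s)}$, using that the reflected cosine supplies the needed sign change. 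This shows the image curve solves \eqref{eqs} on the shifted interval $\theta\in(\pi/2,3\pi/2)$.

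Since both computations are short applications of the chain rule together with elementary parity properties of $\sin$ and $\cos$, there is no genuine obstacle here. The only point requiring a little care is bookkeeping the sign from the time-reversal $s\mapsto -s$ in part (1) and confirming it cancels against the sign introduced by $\theta\mapsto-\theta$; getting the parity of each component of the vector field right is the crux. This is exactly why the authors declare the proof immediate and omit it.
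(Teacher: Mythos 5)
Your proposal is correct: both substitutions check out (the time-reversal sign cancels against the oddness of $z\sin\theta$ and the evenness of $2\cos\theta\,(1-z)/z$ in part (1), and the identities $\sin(\pi-\theta)=\sin\theta$, $\cos(\pi-\theta)=-\cos\theta$ handle part (2)), and your uniqueness argument for orbits meeting the line $\theta=0$ is exactly the right way to extract the symmetry conclusion. This direct verification is precisely the computation the paper declares immediate and omits, so your approach coincides with the paper's.
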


We define the phase plane of \eqref{eqs} as the set 
$$\Theta=\{(z,\theta)\colon z>0,\theta\in(-\pi/2,\pi/2)\}.$$
 The coordinates $(z,\theta)$ are in one-to-one correspondence to the orbits of \eqref{eqs}. The motion of any orbit in $\Theta$ is uniquely determined by the sign of the functions $z'$ and $\theta'$. From \eqref{eqs},   the signs of $z'$ and $\theta'$  are determined by the signs of $\cos\theta$ and $\sin\theta$ as well as of the function $z-1$. For example, any orbit intersecting the line $z=1$ changes the monotonicity of its second coordinate, attaining a local maximum or minimum. In the remaining of the phase plane, the second coordinate of any orbit is strictly monotonous.

The explicit examples of horo-shrinkers given in  Section \ref{sec2} are now viewed as  trivial solutions of \eqref{eqs2} in the following result.

\begin{proposition}\label{prorbit}
Explicit examples of   orbits of \eqref{eqs} are the following: 
\begin{enumerate}
\item The point $(1,0)$. This orbit  corresponds to the horosphere $\H_1$.  
\item The lines $\theta=\pm\pi/2$. These orbits  correspond to vertical planes (totally geodesic planes) of equation $x=x_0$, where $x_0\in\r$.  The parameter $\theta=\pi/2$ implies that the vertical plane is parametrized with increasing height, and for $\theta=-\pi/2$ the height is decreasing.
\end{enumerate}
\end{proposition}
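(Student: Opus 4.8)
The plan is a direct verification: both families are nothing more than the simplest solutions of the autonomous system \eqref{eqs}, obtained by forcing one of the two components of the defining vector field to vanish, and then tracing back through the full system \eqref{eqs2} to identify the corresponding generating curve and hence the surface.

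For the first item, I would begin by checking that $(1,0)$ is an equilibrium of \eqref{eqs}. Substituting $z=1$, $\theta=0$ gives $z'=z\sin\theta=0$ and $\theta'=2\cos\theta\,(1-z)/z=0$, so the constant map $\gamma(s)\equiv(1,0)$ is the orbit through this point, and by uniqueness of the Cauchy problem it is the only one. To recover the surface, I would feed $z\equiv1$, $\theta\equiv0$ into \eqref{eqs2}: the first equation yields $x'=z\cos\theta=1$, hence $x(s)=s+x_0$, so the generating curve $\alpha(s)=(s+x_0,0,1)$ is a horizontal straight line at height $z=1$. Sweeping $\alpha$ by the parabolic group as in \eqref{pp} produces the horizontal plane $z=1$, which is exactly the horosphere $\H_1$.

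For the second item, the key observation is that $\cos(\pm\pi/2)=0$, so along $\theta=\pm\pi/2$ the second component of \eqref{eqs} vanishes identically; thus each horizontal line $\theta=\pm\pi/2$ is invariant under the flow. On $\theta=\pi/2$ the first equation reduces to $z'=z$, whose solution $z(s)=z_0e^{s}$ is positive for every $s\in\r$ and sweeps out all of $(0,\infty)$, so the orbit is the entire line $\theta=\pi/2$; likewise on $\theta=-\pi/2$ one gets $z'=-z$, $z(s)=z_0e^{-s}$, again covering all of $(0,\infty)$. To identify the surfaces, I would again pass to \eqref{eqs2}: since $\cos\theta=0$, the first equation gives $x'=0$, so $x(s)\equiv x_0$ and the generating curve $\alpha(s)=(x_0,0,z(s))$ is a vertical straight line, whose parabolic translation is the vertical (totally geodesic) plane $x=x_0$. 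Finally, the sign of $z'$ distinguishes the two cases: for $\theta=\pi/2$ we have $z'=z>0$, so the plane is parametrized with increasing height, whereas for $\theta=-\pi/2$ we have $z'=-z<0$ and the height decreases.

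There is essentially no analytic obstacle here; the statement is an explicit integration. The only point requiring a little care is the bookkeeping between the reduced system \eqref{eqs} and the full system \eqref{eqs2}: the phase-plane description discards the $x$-coordinate, so recovering the actual generating curves (and thereby the geometric identification with $\H_1$ and with the vertical planes, together with the monotonicity of the height) requires reinstating $x$ via $x'=z\cos\theta$. I would also note, for completeness, that the lines $\theta=\pm\pi/2$ lie on the boundary of the phase plane $\Theta$ rather than in its interior, which is consistent with their being the degenerate limiting orbits of the system.
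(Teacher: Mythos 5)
Your proposal is correct, and it is exactly the direct verification the paper has in mind: the paper states Proposition \ref{prorbit} without proof, treating the substitution of $(z,\theta)=(1,0)$ and $\theta=\pm\pi/2$ into \eqref{eqs} and the recovery of $x(s)$ from \eqref{eqs2} as immediate. Your careful bookkeeping (reinstating $x$ via $x'=z\cos\theta$ to identify $\H_1$ and the vertical planes, and reading off the monotonicity of the height from the sign of $z'$) fills in precisely the omitted routine steps.
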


\begin{figure}[h]
\begin{center}
\includegraphics[width=.35\textwidth]{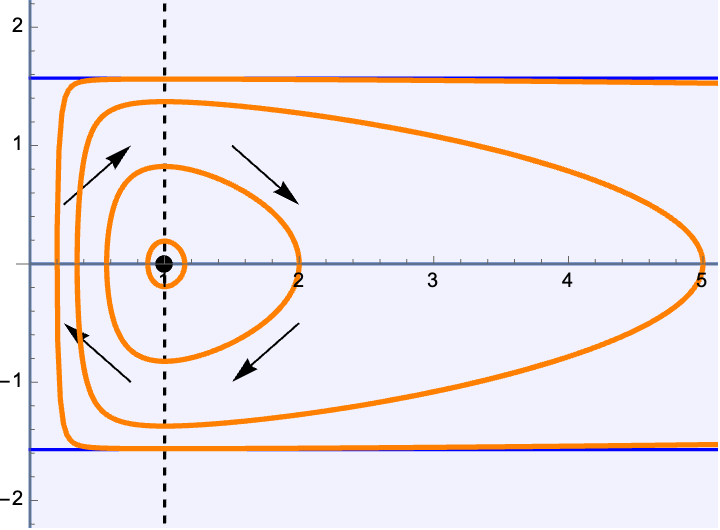}\hspace{1cm}
\includegraphics[width=.45\textwidth]{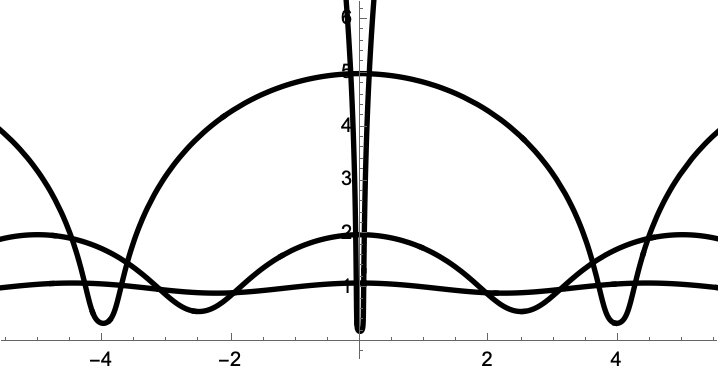}
\end{center}
\caption{Left: the phase plane of \eqref{eqs} and different orbits portrayed. Right: the generating curves of the corresponding orbits. The initial height are $z_0=0.2$, $1.1$, $2$ and $5$.}
\label{fig1}
\end{figure}

As usual, for the description of the orbits of the autonomous system \eqref{eqs}, we analyze its equilibrium points. It is clear that the point $P_0=(1,0)$ is the unique equilibrium. The linearized of the system around $P_0$ is
$$
\left(
\begin{matrix}
0&1\\
-2&0
\end{matrix}
\right).
$$

Since the eigenvalues are imaginary numbers with zero real part, the equilibrium $P_0$ has a center structure. Thus  the orbits of the linearized system are ellipses enclosing $P_0$ in their inner regions. Consequently, the orbits  that are close enough to $P_0$ either spiral around $P_0$ or are closed curves enclosing $P_0$ in their inner regions. However, by Prop. \ref{prfases} the orbits are symmetric about $\theta=0$, hence they cannot spiral around $P_0$. In particular all the orbits stay at a positive distance from $P_0$: see Fig. \ref{fig1}, left. Note: all figures in this paper have been plotted using the software  Mathematica.

We now derive the classification of   the grim reapers.

\begin{theorem}\label{t2}
The classification of the grim reapers is the following:
\begin{enumerate}
\item Vertical planes (totally geodesic planes).
\item The horosphere $\H_1$.
\item A one-parameter family of entire graphs $\mathcal{G}(z_0),\ z_0\in(0,1)$ that are periodic  along the $x$-direction. The value $z_0$ indicates  the Euclidean distance of $\mathcal{G}(z_0)$ at $z=0$. Moreover: 
\begin{enumerate}
\item  If $z_0\rightarrow0$, then $\mathcal{G}(z_0)$ converges to a double covering of a vertical plane.
\item  If $z_0\rightarrow1$ then $\mathcal{G}(z_0)$ converges to the horosphere  $\H_1$.
\item For each $z_0\in(0,1)$ there exists a unique $z_0^*\in(1,\infty)$ that corresponds to the Euclidean height of $\mathcal{G}(z_0)$ at $z=0$. Hence $\mathcal{G}(z_0)$ can be also parametrized in terms of $z_0^*$, being equivalent. In fact, if $z_0\rightarrow0$ (resp. $z_0\rightarrow1$) then $z_0^*\rightarrow\infty$ (resp. $z_0^*\rightarrow1$).
\end{enumerate}
\end{enumerate}
\end{theorem}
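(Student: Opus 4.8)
The plan is to organize the classification around the conserved quantity already recorded in \eqref{fi}, which collapses the phase-plane analysis of \eqref{eqs} to the study of a single one-variable function. Items (1) and (2) are precisely the trivial orbits of Prop. \ref{prorbit}, so the content lies in the family (3), which I would extract from the non-equilibrium orbits inside the phase plane $\Theta$. On $\Theta$ one has $\cos\theta>0$, hence the constant $c$ in \eqref{fi} is positive and each orbit is a level set of $(z,\theta)\mapsto \cos\theta\, e^{-2/z}/z^2$. First I would analyze $f(z)=z^2 e^{2/z}$ on $(0,\infty)$: a direct computation gives $f'(z)=2e^{2/z}(z-1)$, so $f$ decreases on $(0,1)$, increases on $(1,\infty)$, attains its global minimum $f(1)=e^2$, and diverges to $+\infty$ as $z\to 0^+$ and as $z\to\infty$. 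Since \eqref{fi} forces $cf(z)=\cos\theta\le 1$, this yields at once $c\le e^{-2}$, with equality only at the equilibrium $P_0=(1,0)$; and for every $c\in(0,e^{-2})$ the height $z$ is confined to $[z_0,z_0^*]$, whose endpoints are the two roots $z_0<1<z_0^*$ of $f(z)=1/c$.

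Next I would show that each such orbit is a closed curve encircling $P_0$, hence periodic. At the endpoints $z=z_0$ and $z=z_0^*$ the relation $cf(z)=1$ gives $\cos\theta=1$, i.e.\ $\theta=0$; the coordinate $\theta$ attains its extrema exactly on $z=1$ (where $\theta'=0$ with $\cos\theta\ne 0$), and there $\cos\theta_{\max}=ce^2<1$, so $\theta_{\max}\in(0,\pi/2)$ and the orbit stays strictly inside $\Theta$, never meeting the lines $\theta=\pm\pi/2$. Combined with the symmetry about $\theta=0$ from Prop. \ref{prfases}(1), the compactness of the level set in $\Theta$, and the absence of any further equilibrium, this forces the orbit to be a closed curve around $P_0$, traversed periodically in $s$ with some period $S>0$. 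To pass from the orbit to the surface I would reconstruct the generating curve via \eqref{eqs2}: on the orbit $x'=z\cos\theta\ge z_0\cos\theta_{\max}>0$, so $x$ is a strictly increasing bijection of $\r$ and the curve is an entire graph $z=z(x)$; periodicity of $(z,\theta)$ in $s$ transfers to $z(x)$ with period $L=\int_0^S z\cos\theta\,ds>0$. This produces the entire, $x$-periodic graphs $\mathcal{G}(z_0)$, parametrized by the minimal height $z_0\in(0,1)$, with maximal height $z_0^*$.

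For the remaining assertions I would track the dependence of the orbit on $c$, equivalently on $z_0$. The relation $c=e^{-2/z_0}/z_0^2=1/f(z_0)$ is a strictly increasing bijection from $z_0\in(0,1)$ onto $c\in(0,e^{-2})$, and pairing $z_0$ with the large root $z_0^*$ of $f(z)=1/c$ gives the bijection $z_0\mapsto z_0^*$ of item (3)(c); its limits $z_0^*\to\infty$ as $z_0\to0$ and $z_0^*\to1$ as $z_0\to1$ follow from $c\to0$ and $c\to e^{-2}$ together with the monotonicity of $f$ on $(1,\infty)$. For (3)(b), as $z_0\to1$ we have $c\to e^{-2}$ and the compact level sets shrink to the center $P_0$, so the generating curves converge to $z\equiv1$, i.e.\ to $\H_1$. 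For (3)(a), as $z_0\to0$ we have $c\to0$, whence $\cos\theta_{\max}=ce^2\to0$ and $\theta_{\max}\to\pi/2$: along most of its length the orbit is pinned near $\theta=\pm\pi/2$, where $x'=z\cos\theta\to0$ while $z'=z\sin\theta$ does not, so the generating curve runs nearly vertically upward and then nearly vertically downward over a vanishing horizontal span, exhibiting the limit as a double covering of a vertical plane.

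The main obstacle is this degenerate limit in (3)(a): making precise that $\mathcal{G}(z_0)$ converges to a double covering of a vertical plane requires controlling both the horizontal period $L$ and the concentration of the profile near the directions $\theta=\pm\pi/2$ as $z_0\to0$, which I would handle by estimating $L=\int_0^S z\cos\theta\,ds$ and the fraction of the period spent with $\theta$ close to $\pm\pi/2$ through the conserved relation \eqref{fi}. A secondary technical point is justifying that the level sets of $\cos\theta\, e^{-2/z}/z^2$ are genuinely closed curves depending continuously on $c$, so that the limit in (b) and the parametrization in (c) are legitimate; this is routine from the center structure of $P_0$ and the compactness of the level sets in $\Theta$, but should be stated with care.
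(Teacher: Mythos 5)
Your proposal is correct, but it follows a genuinely different route from the paper. The paper's proof is purely qualitative phase-plane analysis: it tracks the orbit $\gamma_{z_0}$ through $(z_0,0)$ using the signs of $z'$ and $\theta'$ in \eqref{eqs}, the symmetry of Prop.~\ref{prfases}, and the center structure of $P_0$ (to exclude spiraling), and it establishes the one-to-one correspondence $z_0\leftrightarrow z_0^*$ and the limit $z_0^*\to\infty$ only indirectly, via a contradiction argument resting on Prop.~\ref{proporbitaborde}. You instead promote the first integral \eqref{fi} --- which the paper derives only as a tool inside the proof of Prop.~\ref{proporbitaborde} and never uses again --- to the organizing principle: orbits are level sets of $\cos\theta\, e^{-2/z}/z^2$, and the elementary analysis of $f(z)=z^2e^{2/z}$ (decreasing on $(0,1)$, increasing on $(1,\infty)$, minimum $e^2$) yields at once the confinement $z\in[z_0,z_0^*]$, the closedness and hence periodicity of the orbits, the \emph{explicit} pairing $z_0^2e^{2/z_0}=(z_0^*)^2e^{2/z_0^*}$ giving the bijection in (3)(c) together with its limits, and the uniform bound $x'\geq z_0\,c\,e^2>0$ that makes $\mathcal{G}(z_0)$ an entire periodic graph. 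This is arguably cleaner and more quantitative than the paper's argument (in particular, your bijection $z_0\mapsto z_0^*$ replaces the paper's indirect contradiction), at the cost of having to justify the routine fact that a trajectory confined to a compact level curve containing no equilibrium is periodic; note that once you have the level-set picture, the symmetry and center-structure arguments you still invoke are not even needed. Finally, the one place where your argument is not fully closed --- the precise sense of convergence to a double covering of a vertical plane in (3)(a) --- is treated no more rigorously in the paper itself, which disposes of it in one sentence (``since $\gamma_{z_0}$ converges to both $\theta=\pm\pi/2$''); your sketch via $dx/dz=\cot\theta(z)$ with $\cos\theta(z)=cf(z)\to 0$ uniformly on compact height ranges is exactly the right way to make it precise on compact subsets of $\h^3$.
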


\begin{proof}

The first two types of surfaces were already depicted in Prop. \ref{prorbit}. Now, fix some $z_0\in(0,1)$. We call $\mathcal{G}(z_0)$ the grim reaper which is the graph of the solution of \eqref{eqs2} for initial conditions $x(0),z(0),\theta(0))=(0,z_0,0)$. Let $\gamma_{z_0}$ be the orbit with initial data $\gamma_{z_0}(0)=(z_0,0)$ and let $\alpha_{z_0}$ be the corresponding generating curve of the grim reaper. Then $\gamma_{z_0}$ is vertical at $(z_0,0)$, for $s>0$ and by monotonicity both coordinates functions $z(s)$ and $\theta(s)$ of $\gamma_{z_0}$ strictly increase. Since $\gamma_{z_0}$ cannot intersect the orbit $\theta=\pi/2$, necessarily $\gamma_{z_0}$ intersects the vertical line $z=1$ where its $\theta$-coordinate  attains a local maximum. Then, $\theta$ decreases and since $\gamma_{z_0}$ cannot converge to $P_0$ due to its center structure, $\gamma_{z_0}$ intersects again the line $\theta=0$ at some $(z_0^*,0)$, where $z_0^*>1$. Finally, by symmetry of the phase plane with respect to the line $\theta=0$, $\gamma_{z_0}$ closes again at the point $(z_0,0)$. See Fig. \ref{fig1}, left. Note that if $z_0\rightarrow1$ then $\gamma_{z_0}\rightarrow P_0$, while if $z_0\rightarrow0$ then $\gamma_{z_0}$ converges to both $\theta=\pm\pi/2$.

At this point, for each $z_0\in(0,1)$ the point  $z_0^*\in(1,\infty)$ corresponds with a local maximum of the function $z=z(s)$ by Prop. \ref{mm}. It could happen that $(z_0)_n\rightarrow0$ and $(z_0^*)_n\rightarrow (z_0^*)_\infty<\infty$. However, this possibility cannot happen in virtue of Prop. \ref{proporbitaborde}. Indeed, assume by contradiction that this behavior occurs, take some $z^*>(z_0^*)_\infty$ and let $\gamma_{z^*}$ be the orbit passing through $(z^*,0)$ at $s=0$. Then, when $s$ increases $\gamma_{z^*}$ cannot intersect $\theta=-\pi/2$, hence $\gamma_{z^*}$ intersects $z=1$. Since it cannot intersect again the line $\theta=0$ (because it would correspond to some $z_*\in(0,1)$, a contradiction), the only possibility for $\gamma_{z^*}$ is to converge to $z=0$. But this contradicts Prop. \ref{proporbitaborde}. As a consequence, for each $z_0\in(0,1)$ there exists exactly one $z_0^*\in(1,\infty)$, being both intervals in a one-to-one correspondence. 

Therefore, the $z$-coordinate of the associated generating curve $\alpha_{z_0}$ is periodic. Since $\theta\in(-\pi/2,\pi/2)$ we conclude that $x'>0$. This implies that  $x$ is strictly increasing, hence  the curve $\alpha_{z_0}$ is   periodic along the $x$-axis, in particular, invariant under a discrete group of   translations along the $x$-axis. The curve $\alpha_{z_0}$ is a graph on the $x$-axis because $x'>0$. The Euclidean height of $\alpha_{z_0}$ at $z=0$ is $z_0^*$ and its distance to $z=0$ is $z_0$. If $z_0\rightarrow1$ then the curve $\alpha_{z_0}$ converges to the horizontal line $z=1$. If $z_0\rightarrow0$ then $\alpha_{z_0}$ converges to a double covering of a vertical line, since $\gamma_{z_0}$ converges to both $\theta=\pm\pi/2$. See Fig. \ref{fig1}, right. This concludes the proof.
\end{proof}

With the same notation as in the proof of Thm.  \ref{t2}, let us consider initial conditions $(x(0),z(0),\theta(0))=(0,z_1,0)$ with $z_1>1$ in system \eqref{eqs2}, and let $\gamma_{z_1}$ be the corresponding orbit passing through $(z_1,0)$. Then, $\gamma_{z_1}$ passes through some $(z_1^0,0)$, with $z_1^0<1$ being the minimum value of the height function $z$. Definitively, $(z_1^0)^*=z_1$ and therefore up to a parabolic translation orthogonal to the ruling direction $(0,1,0)$, the grim reaper $\mathcal{G}(z_1)$ agrees with $\mathcal{G}((z_1^0)^*)$. 
\begin{corollary} Let $z_0\in (0,1)$. Then there is a unique $z_1\in (1,\infty)$ such that $\mathcal{G}(z_0)$ and $\mathcal{G}(z_1)$ coincide up to a   parabolic translation orthogonal  to $(0,1,0)$. 
\end{corollary}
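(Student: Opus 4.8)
The plan is to take $z_1:=z_0^*$, the companion crossing height produced by Thm.~\ref{t2}, and then to show that this is the only admissible choice. The starting observation is that the orbits $\gamma_{z_0}$ and $\gamma_{z_1}$ coincide as a single closed curve in the phase plane $\Theta$. Indeed, by the construction in Thm.~\ref{t2} the orbit $\gamma_{z_0}$ passes through both $(z_0,0)$ and $(z_0^*,0)=(z_1,0)$; since the orbit through $(z_1,0)$ is unique by the Cauchy problem for \eqref{eqs}, it must equal $\gamma_{z_0}$. Thus the two generating curves $\alpha_{z_0}$ and $\alpha_{z_1}$ arise from one and the same orbit, differing only in the choice of initial parameter along it.

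Next I would pass from the orbit back to the generating curves and verify that a shift of the autonomous parameter $s$ translates into a pure horizontal translation of $\alpha$ along the $x$-axis. Writing $\gamma_{z_1}(s)=\gamma_{z_0}(s+s_1)$ for a fixed $s_1$, the recovered abscissa satisfies $x_{z_1}'(s)=z(s+s_1)\cos\theta(s+s_1)=x_{z_0}'(s+s_1)$, whence $x_{z_1}(s)=x_{z_0}(s+s_1)+C$ for some constant $C$. Consequently $\alpha_{z_1}(s)=\alpha_{z_0}(s+s_1)+(C,0,0)$, so the two curves trace the same point set up to the Euclidean horizontal translation by $(C,0,0)$. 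Since the ruling direction of both surfaces is $(0,1,0)$, this is exactly a parabolic translation orthogonal to $(0,1,0)$, and the generated surfaces $\mathcal{G}(z_0)$ and $\mathcal{G}(z_1)$ therefore coincide up to such a translation.

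For uniqueness I would argue that any $z_1\in(1,\infty)$ with the required property forces $\gamma_{z_1}=\gamma_{z_0}$. If $\mathcal{G}(z_0)$ and $\mathcal{G}(z_1)$ agree after a horizontal translation, then their generating curves have the same image, hence the same orbit in $\Theta$; in particular this common orbit meets the line $\theta=0$ at the two heights $z_0<1<z_1$. But the closed orbits described in Thm.~\ref{t2} cross $\theta=0$ at exactly two points, one below and one above $z=1$, so the sub-$1$ crossing of the common orbit must be $z_0$ and its super-$1$ crossing must be the associated value $z_0^*$. Hence $z_1=z_0^*$, which establishes uniqueness and identifies the partner explicitly.

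The delicate point will be the second step: one must confirm that a reparametrization of the autonomous system produces only an additive constant in the recovered coordinate $x$, rather than some more intricate deformation, and that this constant realizes an honest parabolic translation of $\h^3$ compatible with the common ruling direction. Everything else follows formally from the uniqueness of the Cauchy problem for \eqref{eqs} together with the bijection $z_0\leftrightarrow z_0^*$ already secured in Thm.~\ref{t2}.
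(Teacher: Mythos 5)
Your proposal is correct and follows essentially the same route as the paper: both identify $z_1=z_0^*$ via the single closed orbit of the autonomous system \eqref{eqs} passing through $(z_0,0)$ and $(z_0^*,0)$, and both use the fact that recovering the generating curve from an orbit determines $x$ only up to an additive constant, i.e.\ up to a parabolic translation along the $x$-axis. Your explicit time-shift computation and the two-crossing uniqueness argument simply spell out what the paper leaves implicit, namely its remark that $x$ enters \eqref{eqs2} only through $x'$ and the bijection $z_0\leftrightarrow z_0^*$ established in Thm.~\ref{t2}.
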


To finish this section, we address the Dirichlet problem at infinity for Eq. \eqref{mean}. More precisely, let $\Omega\subset \r^2=\{z=0\}\subset\h^3_\infty$ be a bounded domain with smooth boundary. We are asking for functions $u\in C^\infty(\Omega)\cap C^0(\overline{\Omega})$ such that $u$ satisfies \eqref{div} in $\Omega$ with $u>0$ in $\Omega$ and $u=0$ along $\partial\Omega$. This is motivated by the pioneering  works of the theory of constant mean curvature surfaces in hyperbolic space $\h^3$ due to Anderson  ($H=0$) and to Tonegawa ($0<H<1$) \cite{and,to}. The Dirichlet problem at infinity  for horo-expanders was considered in \cite{mr} proving existence under general hypothesis on convexity of $\Omega$. However, for Eq. \eqref{div} we show that the Dirichlet problem at infinity is not solvable. The key is that the grim reapers  of  Thm. \ref{t2} allow us to compare with the possible solutions of the Dirichlet problem at infinity.

\begin{theorem}\label{t22}
There are no solutions of the Dirichlet problem at infinity for Eq.  \eqref{div}.
\end{theorem}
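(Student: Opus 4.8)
The plan is to argue by contradiction: suppose a solution $u\in C^\infty(\Omega)\cap C^0(\overline{\Omega})$ exists, with $u>0$ in $\Omega$ and $u=0$ on $\partial\Omega$, and let $\Sigma=\{z=u(x,y)\}$ be its graph, a horo-shrinker spanning $\partial\Omega\subset\{z=0\}$. First I would extract the only structural fact about $\Sigma$ that I need. Since $u=0$ on $\partial\Omega$ and $u>0$ inside, the maximum $M:=\max_{\overline{\Omega}}u$ is attained at an interior point, and because $\Sigma\neq\H_1$, Prop.~\ref{mm} gives $M>1$. Thus any solution is forced to rise strictly above the horosphere $\H_1$, and it is this feature that must clash with the global shape of the grim reapers.

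The comparison tool will be the grim reapers $\mathcal{G}(z_0)$ of Thm.~\ref{t2}. Two of their properties are decisive. By Prop.~\ref{proporbitaborde} each grim reaper stays at Euclidean distance $\geq z_0>0$ from the ideal boundary $\{z=0\}$, so it can never reach $\partial\Omega$, where $\Sigma$ does descend to $z=0$; moreover its generating curve has $\theta\in(-\pi/2,\pi/2)$, so $x'=z\cos\theta>0$ and the grim reaper has globally bounded Euclidean slope. The strategy is to use a grim reaper as a barrier and invoke the tangency principle, valid because horo-shrinkers are minimal for the conformal metric $e^{-2/z}\langle,\rangle$: if a grim reaper and $\Sigma$ meet at a common tangent point with $\Sigma$ locally on one side, they must agree on an open set, hence everywhere by unique continuation for \eqref{div}. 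This is impossible, since $\Sigma$ is a graph over the bounded set $\Omega$ vanishing on $\partial\Omega$, whereas a grim reaper is entire, periodic and bounded away from $z=0$.

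The core of the proof is therefore to manufacture such a one-sided interior contact. The natural mechanism exploits the boundary: near $\partial\Omega$ the surface descends to $z=0$ while any grim reaper stays above height $z_0$, so there the grim reaper automatically lies above $\Sigma$; I would then deform the grim reaper by the admissible motions (horizontal translations, rotation about a vertical axis, and variation of the parameter $z_0$) until a first interior contact appears, which by Prop.~\ref{proporbitaborde} cannot lie on $\partial\Omega$ and so must be a genuine interior tangency. The same dichotomy can be packaged analytically: integrating \eqref{div} over the exhaustion $\Omega_\epsilon=\{u>\epsilon\}$ and applying the divergence theorem yields
\begin{equation*}
\int_{\Omega_\epsilon}\frac{2(1-u)}{u^2\sqrt{1+|Du|^2}}\,dA
=-\int_{\{u=\epsilon\}}\frac{|Du|}{\sqrt{1+|Du|^2}}\,d\mathcal{H}^1,
\end{equation*}
whose right-hand side is bounded in modulus by the length of $\{u=\epsilon\}$ and hence stays bounded as $\epsilon\to0$; if the grim-reaper comparison furnishes a bound $|Du|\le C$ near $\partial\Omega$, the positive part of the left-hand side over $\{u\le 1/2\}$ diverges like $\int_0 t^{-2}\,dt$ by the coarea formula, a contradiction.

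The step I expect to be the main obstacle is precisely this production of one-sided contact, equivalently the control of $\Sigma$ near $\partial\Omega$. The difficulty is twofold. There is no vertical isometry of $\h^3$ preserving \eqref{eq1} — vertical Euclidean translations $u\mapsto u+c$ do not preserve \eqref{div} — so one cannot simply slide a grim reaper downwards onto $\Sigma$; and no single grim reaper can dominate $\Sigma$ globally, since its troughs dip below $M$. Hence the comparison must be localized near the boundary, and the delicate point is to rule out that $\Sigma$ approaches $z=0$ with a vertical tangent: in that degenerate case a bounded-slope grim reaper could slip past without a forced tangency, and the flux integral above would not obviously diverge. Establishing this boundary control — leaning on the fact that every grim reaper remains at distance $\geq z_0$ from $\{z=0\}$ to block the descent of $\Sigma$ to the ideal boundary — is the crux on which the whole argument rests.
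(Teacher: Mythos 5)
You have the right comparison objects and the right tools---the grim reapers of Thm.~\ref{t2}, the tangency principle, and Prop.~\ref{mm} giving $u_M:=\max_{\overline{\Omega}}u>1$---but the proposal stops exactly at the step that constitutes the proof: you yourself flag the production of an interior one-sided contact as ``the crux on which the whole argument rests'' and leave it unresolved. The missing idea is concrete and short. By Thm.~\ref{t2}, $z_0^*\to\infty$ as $z_0\to 0$, so one can fix $z_0\in(0,1)$ with $z_0^*>u_M$, and then use as barrier \emph{one fundamental piece} $\mathcal{G}(z_0)^F$ of the grim reaper, namely the part of $\mathcal{G}(z_0)$ between two consecutive crests, whose boundary consists of two horizontal lines at height $z_0^*$. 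This piece is bounded in the $x$-direction, so (after translating $\Sigma$ into $\{x>0\}$) it can be placed disjoint from $\Sigma$ and slid along the $x$-direction---a parabolic translation, which preserves \eqref{eq1}---until a first contact point $p$ with $\Sigma$; such a point exists because $\overline{\Omega}$ is compact and the heights of the barrier sweep the whole interval $[z_0,z_0^*]$, which contains $u_M$. Now $p$ is automatically an interior point of $\Sigma$, since $z(p)\geq z_0>0$ while $\partial\Sigma\subset\{z=0\}$, and automatically an interior point of the barrier, since $z(p)\leq u_M<z_0^*$ while the barrier's boundary sits at height $z_0^*$. The tangency principle then forces $\Sigma$ to lie inside the barrier, a contradiction because the barrier is contained in $\{z\geq z_0\}$ and $\Sigma$ is not.

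This choice dissolves both difficulties you raise, which are in fact not obstacles at all. No vertical motion of the grim reaper is ever needed (only horizontal translations), and no control of $\Sigma$ near $\partial\Omega$---in particular no exclusion of vertical tangents and no gradient bound---is required: the contact is forced at height at least $z_0$, away from $\partial\Omega$, and the barrier cannot ``slip past'' $\Sigma$ because its boundary walls, being at height $z_0^*>u_M$, can never cross $\Sigma$. Your alternative flux computation does not close the gap either: it relies on the bound $|Du|\le C$ near $\partial\Omega$, which you concede is unproven (and is not supplied by any comparison you actually set up), and the assertion that the boundary term stays bounded as $\epsilon\to 0$ is itself unjustified, since for $u$ merely continuous up to $\partial\Omega$ the lengths of the level curves $\{u=\epsilon\}$ need not remain bounded.
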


\begin{proof} By contradiction, suppose that $u$ is a solution of \eqref{div}, $u>0$ in $\Omega$ and with initial condition $u=0$ along $\partial\Omega$. Let $\Sigma$ be the graph of $u$ when $u$ is defined in $\Omega$ and let $u_M>0$ be the maximum of $u$ in $\Omega$ which exists because $\overline{\Omega}$ is compact and $u$ is continuous in $\overline{\Omega}$. After a parabolic translation along the $x$-direction, we can assume that $\Sigma$ is included in the half-space $\{(x,y,z)\in\r^3_+:x>0\}$. By Prop. \ref{mm}, let $u_M>1$ be the maximum value of $u$ in $\Omega$. Using Thm. \ref{t2}, let $z_0\in (0,1)$ sufficiently close to $0$ such that $z_0^*$, the maximum height of $\mathcal{G}(z_0)$, satisfies $z_0^*>u_M$. Take the piece of $\mathcal{G}(z_0)$ comprised between two consecutive maximum of $\mathcal{G}(z_0)$. To be precise, if we write the generating curve of $\mathcal{G}(z_0)$ as $z=z(r)$, let $r_0>0$ be such that $z(0)=z_0$, $z(\pm r_0)=z_0^*$ and $r=\pm r_0$ are the only maximum of $z(r)$ in the interval $[-r_0,r_0]$. Consider $\mathcal{G}(z_0)^F$ the piece of $\mathcal{G}(z_0)$ determined by $z(r)$ in the interval $r\in [-r_0,r_0]$, that is, 
$\mathcal{G}(z_0)^F=\mathcal{G}(z_0)\cap \{(x,y,z)\in\r^3: -r_0\leq x\leq r_0\}$. Notice that the boundary of $\mathcal{G}(z_0)^F$ are two straight-lines parallel to the $y$-axis and both situated at height $z_0^*$. 

Let us move $\mathcal{G}(z_0)^F$ by translations along the $x$-direction with $x\searrow-\infty$ until $\mathcal{G}(z_0)^F$ does not intersect $\Sigma$. This is possible because $\Sigma$ is included in the half-space $x>0$ and the rulings of $\mathcal{G}(z_0)^F$ are parallel to the $y$-axis. Next, we move $\mathcal{G}(z_0)^F$ by translations along the $x$-direction with $x\nearrow\infty$ until the first contact point $p$ with $\Sigma$. Let $\widetilde{\mathcal{G}(z_0)^F}$ denote  the position of $\mathcal{G}(z_0)^F$ when it touches $\Sigma$. This point exists because $\overline{\Omega}$ is compact. Since $z_0>0$, then $z(p)\geq z_0$, so it is an interior point of $\Sigma$ (or equivalently, $z(p)\not=0$).   On the other hand,   $z(p)\leq u_M<z_0^*$, so $p$ is an interior point of $\widetilde{\mathcal{G}(z_0)^F}$. Definitively,  $p$ is a common interior point of $\Sigma$ and $\widetilde{\mathcal{G}(z_0)^F}$. The tangency principle implies that $\Sigma$ is included in $\widetilde{\mathcal{G}(z_0)^F}$. This is a contradiction because $\widetilde{\mathcal{G}(z_0)^F}$ is contained in the half-space $\{(x,y,z)\in\r^3_+: z>z_0\}$ where $z_0>0$. 
\end{proof}

\section{Spherical rotational horo-shinkers}\label{sec4}
 
In this section we classify all horo-shrinkers invariant by one-parameter group $\mathcal{S}$ of spherical rotations of $\h^3$.    After an isometry of $\h^3$, we can suppose that the common rotation axis of the elements of $\mathcal{S}$ is   the $z$-axis. Then the elements of $\mathcal{S}$ are simply Euclidean rotations about the $z$-axis, being $\mathcal{S}=\{(x,y,z)\mapsto (x\cos t-y\sin t,x\sin t+y\cos t,z):t\in\r\}$. Thus, a   parametrization   of a spherical rotational surface $\Sigma$ is  
 $$\Psi(s,t)=(x(s)\cos t,x(s)\sin t,z(s)),\quad s\in I\subset\r, t\in\r,$$
 where $\alpha(s)= (x(s),0,z(s))$ is the generating curve.    Notice that horospheres, viewed as horizontal planes of equation $z=c$, $c>0$, also are spherical rotational surfaces. Consequently, the horosphere $\H_1$ is a spherical rotational horo-shrinker.
 
Assume that $\alpha$ is parametrized by   the Euclidean arc-length. Then $\alpha'(s)=( \cos\theta(s),0,\sin\theta(s))$, for some smooth function $\theta=\theta(s)$. The Euclidean mean curvature $H_e$ and the Euclidean unit normal $N^e$ of $\Sigma$ are, respectively, 
 $$H_e=\frac12\left( \theta'+\frac{\sin\theta}{x}\right),\quad N^e=(-\sin\theta\cos t,-\sin\theta\sin t,\cos\theta).$$
By \eqref{mean}, the  equation \eqref{eq1} writes as
\begin{equation}\label{eq3}
\frac{z}{2}\left(\theta'+\frac{\sin\theta}{x}\right)+\cos\theta=\frac{\cos\theta}{z}.
\end{equation}
Thus, Eq. \eqref{eq1} is equivalent to say that the coordinate functions of the curve $\alpha$   satisfy
\begin{equation}\label{spherical}
\left\{
\begin{split}
x'(s)&=\cos\theta(s),\\
z'(s)&=\sin\theta(s),\\
\theta'(s)&=-\frac{\sin\theta(s)}{x(s)}+2\cos\theta(s)\frac{1-z(s)}{z(s)^2}.
\end{split}
\right.
\end{equation}

The study of the spherical rotational horo-shrinkers, or equivalently, of the solutions of \eqref{spherical}, is separated in two cases depending if the surface, or equivalently   the generating curve, intersects or not the rotation axis. 

First consider the case the surface  intersects the rotational axis. We will prove the existence of such surfaces and, in such a case, that this intersection must be orthogonal. Since $\alpha$ intersects the $z$-axis, then at the initial value, say  $s=0$, for \eqref{spherical}, $x(0)$ must be $0$ and $z'(0)=0$. However,  the existence of solutions of \eqref{spherical} is not assured by the standard ODE theory because \eqref{spherical} is degenerate at $x=0$. To such a existence, we  parametrize the  curve $\alpha$ by $r\mapsto (r,0,z(r))$, $z(r)>0$, then   Eq.   \eqref{eq3}  writes as 
 \begin{equation}\label{rot1}
\frac{z}{2}\left(\frac{z''}{(1+z'^2)^{3/2}}+\frac{z'}{r(1+z'^2)^{1/2}}\right)+\frac{1}{(1+z'^2)^{1/2}}=\frac{1}{z(1+z'^2)^{1/2}}.
\end{equation}
Next we prove that there exist solutions of \eqref{rot1}   defined at $r=0$   such that $z'(0)=0$. 
 
\begin{theorem} \label{t3}
If $z_0>0$, then there exist $R>0$ and a solution $z\in C^2([0,R])$ of (\ref{rot1}) with initial conditions
\begin{equation}\label{initial}
z(0)=z_0>0, \quad z'(0)=0.
\end{equation}
\end{theorem}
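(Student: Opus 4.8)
The plan is to put \eqref{rot1} in divergence form, so that the troublesome term $z'/r$ is absorbed by an integral averaging operator, and then to solve the resulting integral equation by Banach's fixed point theorem on a small interval. First I would observe that a direct computation gives
\[
\frac{z''}{(1+z'^2)^{3/2}}+\frac{z'}{r(1+z'^2)^{1/2}}=\frac{1}{r}\frac{d}{dr}\left(\frac{r\,z'}{\sqrt{1+z'^2}}\right),
\]
so that \eqref{rot1} is equivalent to $\frac{d}{dr}\big(rz'/\sqrt{1+z'^2}\big)=\dfrac{2r(1-z)}{z^2\sqrt{1+z'^2}}$. Introducing $q:=z'/\sqrt{1+z'^2}$, so that $\sqrt{1-q^2}=1/\sqrt{1+z'^2}$ and $z'=q/\sqrt{1-q^2}$, and integrating from $0$ while using that $rq\to 0$ as $r\to 0$, the initial value problem \eqref{rot1}–\eqref{initial} becomes the integral system
\[
z(r)=z_0+\int_0^r \frac{q(t)}{\sqrt{1-q(t)^2}}\,dt,\qquad
q(r)=\frac{1}{r}\int_0^r \frac{2t\,(1-z(t))}{z(t)^2}\,\sqrt{1-q(t)^2}\,dt.
\]
Note that the right-hand factor in the second integrand vanishes like $t$ at the origin, so the averaged quantity tends to $0$ as $r\to 0$; this both makes $q$ continuous at $r=0$ and encodes the condition $z'(0)=0$.

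Next I would define the operator $T(z,q)=(\tilde z,\tilde q)$ on the Banach space $C^0([0,R])\times C^0([0,R])$ with the supremum norm, where $\tilde z$ and $\tilde q$ are given by plugging $(z,q)$ into the two right-hand sides above. A fixed point of $T$ is exactly a solution of the integral system. I would then restrict $T$ to the closed ball
\[
B=\Big\{(z,q):\ |z(r)-z_0|\le \tfrac{z_0}{2},\ |q(r)|\le \tfrac12\ \text{ for all }r\in[0,R]\Big\}.
\]
On $B$ one has $z\ge z_0/2>0$, so that $1/z^2\le 4/z_0^2$ is bounded, and $|q|\le 1/2$ keeps $\sqrt{1-q^2}\ge \sqrt3/2$ and $q/\sqrt{1-q^2}$ bounded and smooth; hence the integrands are bounded on $B$ by a constant $M=M(z_0)$. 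Because $\tilde z-z_0=\int_0^r q/\sqrt{1-q^2}$ and $\tilde q(r)=\frac1r\int_0^r(\cdots)\,dt$ with an integrand of size $O(t)$, both $|\tilde z-z_0|$ and $|\tilde q|$ are $O(R)$, so $T(B)\subset B$ once $R$ is small enough.

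Then I would check that $T$ is a contraction for $R$ small. The maps $q\mapsto q/\sqrt{1-q^2}$, $q\mapsto\sqrt{1-q^2}$ and $z\mapsto (1-z)/z^2$ are Lipschitz on the range allowed by $B$, so estimating $\|T(z_1,q_1)-T(z_2,q_2)\|$ produces a factor proportional to the length of the integration interval (the averaging operator $r^{-1}\int_0^r t\,(\cdot)\,dt$ contributes a factor $\le R/2$), giving a Lipschitz constant $\le CR<1$ for $R$ sufficiently small. Banach's fixed point theorem then yields a unique $(z,q)\in B$ solving the integral system. Finally I would bootstrap regularity: the first relation shows $z\in C^1$ with $z'=q/\sqrt{1-q^2}$ continuous and $z'(0)=0$; the second relation says $rq=\int_0^r(\cdots)$, so $rq\in C^1$, and differentiating recovers \eqref{rot1}, from which $z''$ is seen to be continuous, giving $z\in C^2([0,R])$ with $z(0)=z_0$. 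The main obstacle throughout is the singular coefficient at $r=0$: everything hinges on recasting it as the bounded, regularity-improving operator $r^{-1}\int_0^r t\,(\cdot)\,dt$ and on confining the iteration to the ball $B$, which is what keeps $z$ away from $0$ and thereby controls the otherwise unbounded factor $1/z^2$.
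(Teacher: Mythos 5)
Your proof is correct and takes essentially the same route as the paper's: both rewrite \eqref{rot1} in the divergence form $\bigl(rz'/\sqrt{1+z'^2}\bigr)'=2r(1-z)/\bigl(z^2\sqrt{1+z'^2}\bigr)$, tame the singular coefficient at $r=0$ by the bounded averaging operator $r^{-1}\int_0^r t\,(\cdot)\,dt$, run Banach's fixed point theorem on a small interval inside a ball that keeps $z$ away from $0$ and the slope variable away from $\pm1$, and finish with a regularity bootstrap (L'H\^{o}pital-type limit) to get $C^2$ at the origin. The only difference is packaging: you iterate on the pair $(z,q)$ in $C^0\times C^0$, while the paper iterates on $z$ in $C^1([0,R])$ with the single operator $({\mathsf T}z)(r)=z_0+\int_0^r\varphi^{-1}\bigl(s^{-1}\int_0^s t\,g(z,z')\,dt\bigr)\,ds$; these formulations are equivalent.
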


\begin{proof} Multiplying   \eqref{rot1} by $r$,    we can write \eqref{rot1} as 
\begin{equation}\label{rot-r}
\left({\displaystyle \frac{r z'(r)}{\sqrt{1+z'(r)^2}}}\right)'= {\displaystyle 2r\frac{1-z(r)}{z(r)^2\sqrt{1+z'(r)^2}}}.
\end{equation}
Define the functions  
$$g:\r^+\times\r\rightarrow\r,\quad g(x,y)=\frac{2(1-x)}{x^2\sqrt{1+y^2}},$$
$$\varphi:\r\rightarrow\r,\quad \varphi(y)=\frac{y}{\sqrt{1+y^2}}.$$
From \eqref{rot-r},  a  function $z=z(r)\in C^2([0,R])$ satisfies \eqref{rot1}-\eqref{initial} if and only if   $(r\varphi(z'))'=r g(z,z')$ under initial conditions \eqref{initial}.  The inverse function of $\varphi$ is $\varphi^{-1}(x)=x/\sqrt{1-x^2}$, which is defined in $(-1,1)$. Fix $R>0$ that will be determined later and define the operator ${\mathsf T} :C^1([0,R])\to C^1([0,R])$ by
\begin{equation}\label{tt}
({\mathsf T}z)(r)=z_0+\int_0^r\varphi^{-1}\left(\frac{1}{s}\int_0^s t g(z,z')dt\right)ds.
\end{equation}
It is clear that a fixed point of   ${\mathsf T}$ is a solution of the initial value problem \eqref{initial}-\eqref{rot-r}. First, we prove the existence of $\epsilon>0$ such that  ${\mathsf T}$ is well defined in a closed ball $\overline{B(z_0,\epsilon)}$ of  $C^1([0,R])$. Here we understand that   the space $C^1([0,R])$ is endowed the usual sup-norm $\|z\|=\|z\|_\infty+\|z'\|_\infty$. For this, let $\epsilon>0$ such that $\epsilon<z_0$, and consider $g$ defined in $[z_0-\epsilon,z_0+\epsilon]\times\r$. Let $M>0$ such that $|\frac{2(1-z)}{z^2}|\leq M$ for all $|z-z_0|\leq\epsilon$.  Let  
$R\leq\min\{\frac{1}{M},\frac{\sqrt{3}\epsilon}{2}, \frac{\sqrt{3}\epsilon}{2M}\}$. We have
$$
\int_0^s\frac{t}{s}g(z,z')\, dt\leq \int_0^s\frac{t}{s}M\, dt\leq\frac{RM}2\leq\frac 12,
$$
because $R\leq 1/M$. This allows to apply $\varphi^{-1}$ in the parenthesis of \eqref{tt}.  In order to use  the Banach fixed point theorem, we need the two following steps.  

(1) {\it The map ${\mathsf T}$ satisfies ${\mathsf T}(\overline{B(z_0,\epsilon)})\subset \overline{B(z_0,\epsilon)}$}. To prove this inclusion, let $z\in \overline{B(z_0,\epsilon)}$. By using that $\varphi^{-1}$ is increasing, we have
\begin{equation*}
\begin{split}
|({\mathsf T}z)(r)-z_0|&\leq \int_0^r\varphi^{-1}\left(\int_0^s\frac{t}{s(z_0-\epsilon)}\, dt\right)\, ds <\varphi^{-1}\left(\frac12\right)R=\frac{R}{\sqrt{3}}\leq\frac{\epsilon}{2},\\
|({\mathsf T}z)'(r)|&\leq  \varphi^{-1}\left(\int_0^s\frac{t}{s}M\, dt\right)\leq\varphi^{-1}\left(\frac{R}{2}M\right)=M\frac{R}{\sqrt{4 -RM^2}}\leq \frac{RM}{\sqrt{3}}\leq \frac{\epsilon}{2}
\end{split}
\end{equation*}
because $R\leq \sqrt{3}\epsilon/2$ and $R\leq \sqrt{2}\epsilon/(2M)$, respectively.  As a conclusion, $\|{\mathsf T}z\|\leq\epsilon$.

(2) {\it The map ${\mathsf T}$ is a contraction}.  The functions $g$ and  $\varphi^{-1}$ are  Lipschitz continuous in
$[z_0-\epsilon,z_0+\epsilon]\times[-\epsilon,\epsilon]$ and  $[-\epsilon,\epsilon]$, respectively provided $0<\epsilon<\min\{z_0,1\}$. Let $L=\min\{L_g,L_{\varphi^{-1}}\}$, where $L_g$ and $L_{\varphi^{-1}}$ stand for the Lipschitz constants of $g$ and $\varphi^{-1}$, respectively. Given  $z,\tilde{z}\in\overline{B(z_0,\epsilon)}$,  for all $r\in [0,R]$ we have
\begin{equation*}
\begin{split}
|({\mathsf T}z)(r)-({\mathsf T}\tilde{z})(r)|&\leq  L\int_0^r\frac{1}{s}\int_0^st\left| g(z,z')-g(\tilde{z},\tilde{z}')\right|\, dt\, ds\\
&\leq L^2\int_0^r\frac{1}{s}\int_0^s t \left( \|z-\tilde{z}\|_\infty+\|z'-\tilde{z}'\|_\infty\right)\, dt\, ds\\
&=L^2 \| z-\tilde{z}\|\int_0^r\frac{s}{2}\, ds=\frac{r^2L^2}{4}\|z-\tilde{z}\|.
\end{split}
\end{equation*}
Analogously, 
$$|({\mathsf T}z)'(r)-({\mathsf T}\tilde{z})'(r)|\leq\frac{rL^2}{2}\|z-\tilde{z}\|.$$
Therefore
$$\| {\mathsf T}z-{\mathsf T}\tilde{z}\|\leq\min\{\frac{R^2L^2}{4},\frac{RL^2}{2}\}\|z-\tilde{z}\|.$$
Since $L$ is fixed, by choosing $R>0$ small enough, we conclude that ${\mathsf T}$ is a contraction. 
 
  The solution $z=z(r)$ obtained by the Banach fixed point theorem lies in $C^1([0,R])\cap C^2((0,R])$. We prove  that $z(r)$ can be extended up to  $C^2$-regularity at $r = 0$. From    \eqref{rot1},   the L'H\^{o}pital rule gives 
\begin{equation}\label{hopital}
\lim_{r\rightarrow 0} z''(r)=\frac{1-z_0}{z_0^2}.
\end{equation}
This completes the proof of the theorem.
\end{proof}

Once we have proved the existence of   spherical rotational horo-shrinkers intersecting orthogonally the rotation axis, our next goal is to achieve a full classification of such surfaces. First, we prove the following result which is valid for any solution of \eqref{spherical}.  

\begin{proposition}\label{pr1} 
Let $\alpha(s)=(x(s),0,z(s))$ be a solution of \eqref{spherical}.   If $\alpha$ is  not a graph, then the $x$-coordinate has exactly one critical point which is a minimum. In consequence, if  $\alpha$ intersects the rotation axis then  $\alpha$ is a graph.
\end{proposition}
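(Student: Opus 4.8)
The plan is to control the critical points of the function $x(s)$ through its second derivative, which is obtained by differentiating the first equation of \eqref{spherical}: $x''=-\theta'\sin\theta$. At any critical point $s_0$ of $x$ one has $x'(s_0)=\cos\theta(s_0)=0$, hence $\theta(s_0)=\pm\pi/2$ and $\sin\theta(s_0)=\pm1$. The first thing I would check is that such a critical point cannot lie on the rotation axis: if $x(s_0)=0$, then the term $-\sin\theta/x$ appearing in the third equation of \eqref{spherical} would be unbounded near $s_0$ (since $\sin\theta(s_0)=\pm1\neq0$), which is incompatible with $\alpha$ being a smooth solution. Thus every critical point of $x$ satisfies $x(s_0)>0$.

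With this in hand the key computation is immediate. Substituting $\theta(s_0)=\pm\pi/2$ into the third equation of \eqref{spherical}, the term $2\cos\theta\,(1-z)/z^2$ vanishes and $\theta'(s_0)=-\sin\theta(s_0)/x(s_0)$. Therefore
$$x''(s_0)=-\theta'(s_0)\sin\theta(s_0)=\frac{\sin^2\theta(s_0)}{x(s_0)}=\frac{1}{x(s_0)}>0,$$
independently of the sign of $\theta(s_0)$. Consequently every critical point of $x$ is a strict local minimum.

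The remaining part is a short maximum-principle argument. Since all critical points are strict local minima, there can be at most one: were there two minima $s_1<s_2$, the continuous function $x$ would attain its maximum on $[s_1,s_2]$ at an interior point (the endpoints are strict minima, so the maximum cannot be attained there), and that interior maximum would be a critical point which is not a minimum, a contradiction. On the other hand, if $\alpha$ is not a graph then $x$ is not strictly monotonic, so $x'$ must vanish and at least one critical point exists. Combining both observations, $x$ has exactly one critical point and it is a minimum.

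For the final assertion I would argue by contraposition. The unique minimum produced above has value $x(s_0)>0$, and since $x$ decreases up to $s_0$ and increases afterwards, $x(s)\geq x(s_0)>0$ for all $s$; hence a non-graph solution never reaches $x=0$. Equivalently, if $\alpha$ intersects the axis then it must be a graph. I expect the only delicate point to be the exclusion of critical points on the axis: it is precisely here that the orthogonal intersection $z'(s_0)=0$ (equivalently $\theta(s_0)\in\{0,\pi\}$, so $\cos\theta\neq0$) guaranteed by Thm.~\ref{t3} is implicitly used, since it prevents the degenerate term $-\sin\theta/x$ from being compatible with a vanishing $x'$; everything else reduces to the single second-derivative computation together with the elementary fact that strict local minima cannot occur consecutively without an intervening maximum.
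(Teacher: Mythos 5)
Your proof is correct and follows essentially the same route as the paper: the key step in both is that at any critical point $s_0$ of $x$ one has $\cos\theta(s_0)=0$, hence $\theta'(s_0)=-\sin\theta(s_0)/x(s_0)$ and $x''(s_0)=1/x(s_0)>0$, so every critical point is a strict local minimum, which forces at most one critical point and is incompatible with a curve leaving the axis with $x'(0)=1$. Your extra details (excluding critical points on the axis via the blow-up of $\sin\theta/x$, the interior-maximum argument for uniqueness, and the contrapositive phrasing of the final claim) merely make explicit what the paper leaves implicit.
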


\begin{proof}
If $\alpha$ is not a graph, then there is a critical point $s_0$ of $x$, $x'(s_0)=0$ with $x(s_0)>0$. From   \eqref{spherical} we have  $\theta'(s_0)=-\sin\theta(s_0)/x(s_0)=\pm\frac{1}{x(s_0)}$ and thus
$$x''(s_0)=\frac{1}{x(s_0)}>0,$$
which yields that $s_0$ is a local minimum of $x(s)$. This proves that $s_0$ must be a local minimum and in such a case, no more critical points of $x(s)$ exist.

If $\alpha$ intersects the rotation axis at $s=0$, then $x(0)=0$ and $x'(0)=1$. If $s_0>0$ is the first critical point of $x(s)$, then $s_0$ would be a local minimum, a contradiction.  
\end{proof}

As a consequence of Prop. \ref{pr1}, the generating curve $\alpha$ of a spherical rotational horo-shrinker that intersects the rotation axis can be globally parametrized by $z=z(r)$.  By Eq. \eqref{rot1}, $z(r)$ is a solution of the initial value problem
\begin{equation}\label{rot2}
\left\{
\begin{split}
 &\frac{z''}{1+z'^2}+\frac{z'}{r}=2\frac{1-z}{z^2}\\
& z(0)=z_0>0,\quad z'(0)=0.
\end{split}
\right.
\end{equation}

Let $J=[0,r_{max})$ stand for the maximal domain of the solutions of \eqref{spherical}, where $r_{\max}\in\r\cup\{\infty\}$. We denote by $\mathcal{B}(z_0)$ the spherical rotational horo-shrinker generated by $z(r)$, whose intersection with the rotation axis occurs at $z=z_0$. The following result exhibits the properties of $\mathcal{B}(z_0)$ and its classification.  Numerical examples are depicted in Fig. \ref{fig2}.

\begin{theorem}\label{t4}
The spherical rotational horo-shinkers interesecting the rotation axis are the surfaces $\mathcal{B}(z_0)$, where the parameter $z_0>0$ indicates the height of the intersection point of the surface with the rotation axis. Each $\mathcal{B}(z_0)$ is an entire graph that oscillates around the horosphere $\H_1$. Furthermore, 
\begin{enumerate}
\item If $z_0=1$, then $\mathcal{B}(1)=\H_1$.
\item If $z_0\in (0,1)$, then $\mathcal{B}(z_0)$ is strictly convex at $r=0$.
\item If $z_0\in (1,\infty)$, then $\mathcal{B}(z_0)$ is strictly concave at $r=0$.
\end{enumerate}
\end{theorem}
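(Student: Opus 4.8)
\emph{Plan.} The classification and the local statements are quick, while the two global assertions---entireness and the oscillation around $\H_1$---carry the real content. For those I would set up a Lyapunov (``energy'') function for \eqref{rot2}. First I would dispatch the classification together with (1)--(3). By Thm.~\ref{t3} a solution of \eqref{rot2} with $z(0)=z_0>0$, $z'(0)=0$ exists, and since \eqref{rot2} is a regular ODE for $r>0$ it is unique there; by Prop.~\ref{pr1} any generating curve meeting the axis is a graph $z=z(r)$ with $z'(0)=0$ (orthogonal intersection), so the surfaces meeting the axis are exactly the $\mathcal{B}(z_0)$ and $z_0=z(0)$ is the announced parameter. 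For $z_0=1$ the constant $z\equiv1$ solves \eqref{rot2}, whence $\mathcal{B}(1)=\H_1$ by uniqueness. Statements (2)--(3) are then read off from \eqref{hopital}, since $z''(0)=(1-z_0)/z_0^2$ is positive for $z_0\in(0,1)$ and negative for $z_0>1$.

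For entireness I would introduce the function
$$E(r)=\tfrac12\ln\!\bigl(1+z'(r)^2\bigr)+2\ln z(r)+\frac{2}{z(r)}.$$
Multiplying \eqref{rot2} by $z'$ and regrouping gives $E'(r)=-z'^2/r\le 0$, so $E$ is non-increasing and $E(r)\le E(0)=V(z_0)$, where $V(z)=2\ln z+2/z$ is proper on $(0,\infty)$ with a strict global minimum $V(1)=2$. Since $\tfrac12\ln(1+z'^2)\ge0$, this traps $z(r)$ in the compact sublevel interval $\{V\le V(z_0)\}=[a,b]\subset(0,\infty)$ and simultaneously bounds $z'$. Hence $(z,z')$ stays in a fixed compact set, the solution cannot escape in finite ``time'', and it extends to all of $[0,\infty)$ with $z\ge a>0$; thus $\mathcal{B}(z_0)$ is an entire graph.

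The oscillation around $\H_1$ is the main obstacle. The alternation itself is cheap: by Prop.~\ref{mm} every local maximum of $z$ occurs at height $>1$ and every local minimum at height $<1$, so between two consecutive critical points the curve must cross $z=1$, and it suffices to produce infinitely many critical points. Suppose not; then $z-1$ has eventually constant sign, say $z>1$ for large $r$ (the case $z<1$ is symmetric, and $z\equiv1$ only when $z_0=1$). In this region Prop.~\ref{mm} forbids local minima, so two maxima cannot coexist and $z$ is eventually monotone; being bounded it tends to some $L$, and feeding $z'\to0$ into \eqref{rot2} forces $2(1-L)/L^2=0$, i.e.\ $L=1$ and $z'\to0$. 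Writing $w=z-1$ and freezing the solution-dependent coefficients, $w$ solves $w''+a(r)w'+b(r)w=0$ with $a(r)=(1+z'^2)/r\to0$ and $b(r)=2(1+z'^2)/z^2\to2$; the normal-form substitution $w=u\exp(-\tfrac12\int a)$ yields $u''+c(r)u=0$ with $c(r)=b-\tfrac14a^2-\tfrac12a'\to2>0$, and Sturm comparison with $u''+u=0$ shows $u$, hence $w$, has infinitely many zeros, contradicting $z>1$ for large $r$.

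The hard point is precisely this last step---ruling out a monotone approach to $\H_1$---because the damping term $z'/r$ vanishes at infinity, so the energy $E$ alone is consistent with a non-oscillating limit; it is the normal-form oscillation criterion that forces the infinitely many crossings of $z=1$ and hence the genuine oscillation of $\mathcal{B}(z_0)$ around $\H_1$. A cleaner alternative worth trying would be to read the same behaviour directly from the phase portrait of \eqref{spherical} in the $(z,\theta)$-plane, using that the linearization of the equilibrium $(1,0)$ is of Bessel type; but the Sturm argument above seems the most self-contained route given the tools already developed in the excerpt.
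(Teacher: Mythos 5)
Your proposal is correct, and on the decisive step it takes a genuinely different route from the paper's. The classification and items (1)--(3) are treated exactly as in the paper (Thm.~\ref{t3}, Prop.~\ref{pr1}, uniqueness, \eqref{hopital}), and your Lyapunov function $E$ is the paper's identity \eqref{el} in disguise: multiplying \eqref{rot2} by $z'$ and integrating gives $E(r)+\int_0^r z'(t)^2/t\,dt=E(0)$, so the boundedness of $z$ and $z'$ and the conclusion $r_{max}=\infty$ (items 4 and 5 of the paper's proof) are obtained by the same mechanism, just packaged more cleanly. The genuine difference is the oscillation argument. The paper follows the solution phase by phase --- inflection, then a maximum above height $1$ (Prop.~\ref{mm}), then an inflection, then a minimum below height $1$, and so on --- and excludes the dangerous scenario of a monotone approach to height $1$ by dividing \eqref{rot2} by $z'$ and integrating, which yields $z'>c_2/\sqrt{r^2-c_2^2}$ and hence logarithmic divergence of $z$, contradicting the bound coming from \eqref{el}. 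You instead reduce oscillation to producing infinitely many critical points, show that an eventually monotone solution must converge to $1$ with $z'\to0$, and kill that scenario by the exact linearization $w=z-1$ (whose coefficients depend on the solution itself), normal form, and Sturm comparison, using $c(r)\to 2>0$. Both are valid. The paper's route is more elementary (no oscillation theory) and yields extra quantitative information along the way (e.g.\ $z_m=z_0$ and the precise alternation of maxima and minima); your route is more modular, treats the cases $z>1$ and $z<1$ symmetrically, and is actually tighter at one spot: in the paper's item 7 (monotone decay from above), ``substituting in \eqref{rot2}'' only forces the limit to equal $1$, and one still has to repeat the integral estimate of item 6 to reach a contradiction, whereas your Sturm argument disposes of both monotone scenarios at once.

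Two small glosses you should patch, both at the same level of informality as the paper's own proof: (i) $z\to L$ alone does not give $z'\to 0$; you need that $z''$ is bounded (which follows from \eqref{rot2} and the compactness provided by $E$) together with a mean-value/Barbalat argument; (ii) ``finitely many critical points implies eventually monotone'' requires critical points to be isolated, which holds because at a critical point $z''=2(1-z)/z^2$, so the critical point is nondegenerate unless $z=1$ there, and $z=1$, $z'=0$ at some $r>0$ forces $z\equiv 1$ by uniqueness, contradicting $z_0\neq 1$.
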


\begin{proof}
The case $z_0=1$ follows immediately by just checking that the constant function $z(r)=1$ fulfills \eqref{rot2}. This proves the assertion (1). Suppose now  $z_0\neq1$. Substituting at \eqref{rot2}, we have 
$$
z''(0)=\frac{1-z_0}{z_0}^2.
$$
If $z_0\in(0,1)$ (resp. $z_0\in (1,\infty)$) then $z''(0)>0$, the function $z(r)$ has a local minimum (resp. local maximum) at $r=0$ and $z(r)$ is strictly convex (resp. concave) for $r>0$ small enough.  This proves (2) and (3).

We now prove  that the solutions $z(r)$ of \eqref{rot2} are   entire graphs (that is, $r_{max}=\infty$)  that oscillate  around the horizontal line $z=1$ in the $xz$-plane. We assume  $z_0\in(0,1)$, as the arguments when  $z_0\in(1,\infty)$ are analogous.  The behavior of $z(r)$ will be deduced by proving a series of claims. 
\begin{enumerate}
\item {\it The function $z(r)$ cannot be  a convex graph for $r>0$}. On the contrary, because    $z',z''$ are positive, we have that $z\rightarrow\infty$ as $r\to r_{max}$. The left-hand side of \eqref{rot2} remains always positive, but its right-hand side is eventually negative, a contradiction.

\item {\it The function $z(r)$ cannot fail to be a graph at finite time $r_0>0$}. Arguing by contradiction,   as $r\rightarrow r_0$ it happens $z(r)\rightarrow z(r_0)$, $z'(r)\rightarrow\infty$, $ z''(r)>0$ for $r$ close to $r_0$. Recall that $\lim_{r\rightarrow r_0}z''(r)$ can be either finite or infinite, but in any case it is positive. Taking limits in \eqref{rot2} as $r\rightarrow r_0$ we see that the left-hand side of \eqref{rot2} goes to $\infty$, while its right-hand side is a finite value, a contradiction. 

\item As a consequence, $z$ must change its convexity, which implies $z''(r_1)=0$ and $z''(r_1)<0$ for $r>r_1$ close enough to $r_1$. In particular, $z'(r_1)>0$ and from \eqref{rot2}, we deduce that $z(r_1)=z_1<1$.

\begin{figure}[h]
\begin{center}
\includegraphics[width=.55\textwidth]{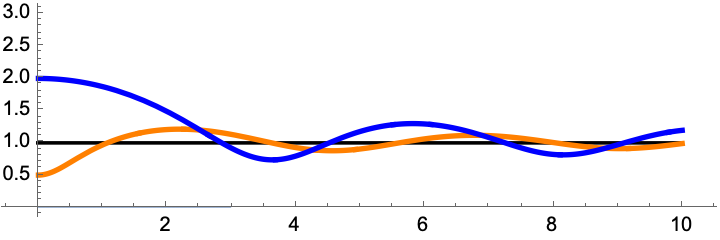}
\end{center}
\caption{Generating curves of spherical rotational horo-shrinkers intersecting the rotational axis, and the horosphere $z=1$ between them. Here $z_0=0,5$, $1$ and $2$.}
\label{fig2}
\end{figure}

\item {\it There are $0<z_m<z_M<\infty$ such that $z_m\leq z(r)\leq z_M$ for all $r\in J$}. Moreover, $z_m=z_0$ (if $z_0>1$, then $z_M=z_0$). In order to prove the claim, let us multiply  \eqref{rot2} by $z'$ and integrate from $0$ to $r$. Then  we obtain
\begin{equation}\label{el}
\frac{1}{2}\log(1+z'^2)+\int_{0}^r\frac{z'(t)^2}{t}dt=-2\left(\frac{1}{z}+\log z\right)+2\left(\frac{1}{z_0}+\log z_0\right).
\end{equation}
If there exists a sequence $r_n\to r_{max}$ such that $z(r_n)\to\infty$, then the right hand-side of \eqref{el} goes $-\infty$, a contradiction because the left hand-side is positive. This proves that $z(r)$ is bounded from above. A similar argument shows that $z(r)$ is bounded from below, by taking a sequence $r_n\rightarrow r_{max}$ such that $z(r_n)\to 0$. 

We now prove that   $z_m=z_0$. On the contrary, let $r_*>0$ be such that $z(r_*)=z_*<z_0$. Letting $r=r_{*}$ in  \eqref{el}, the right hand-side must be positive.  Consider the function $f(t)=-2(\frac{1}{t}+\log t)$, which is negative and increasing in $(0,1)$.  The right hand-side of \eqref{el} writes as $f(z_*)-f(z_0)>0$. Hence  $z_*>z_0$, a contradiction.  

\item {\it We have $r_{max}=\infty$}. We write the first equation of \eqref{rot2} as
\begin{equation}\label{sys}
\left(\begin{array}{l}z\\ z'\end{array}\right)'=\left(\begin{array}{c}z'\\ \displaystyle(1+z'^2)\left(2\frac{1-z}{z^2}-\frac{z'}{r}\right)\end{array}\right).
\end{equation}
Then $r_{max}=\infty$ if we show that the right hand-side of \eqref{sys} is bounded. In fact, by the above claim, it is enough to prove that the function $z'(r)$ is bounded. If there is a sequence $(r_n)\to r_{max}$ such that $|z'(r_n)|\to\infty$, then evaluating \eqref{el} at $r=r_n$ and letting $n\to \infty$, we have that the left hand-side of \eqref{el} diverges. However, the right hand-side is bounded by Claim 4. This contradiction proves the claim. 

\item {\it The function $z(r)$ attains a local maximum at some $r_2>r_1$}. By contradiction, assume that such maximum does not exist, which implies that $z''(r)\leq0$ and $z'(r)>0$ for every $r>r_1$. By the previous claim, since $z(r)$ is strictly increasing and bounded from above, then $z(r)$ has a limit which, without loss of generality, we can suppose that it is  $z_M$. Moreover, $z'(r)\to 0$ as  $r\rightarrow\infty$. 

Next, letting $r\to\infty$  in \eqref{rot2}, we deduce that $z''(r)$ has a limit. Since $z''(r)\leq 0$, we conclude that this limit is $0$. Thus 
$\lim_{r\to\infty}z''(r)=  2\frac{1-z_M}{z_M^2}=0$. This yields $z_M=1$. In particular, the left hand-side of \eqref{rot2} is positive. Therefore, by dividing  in \eqref{rot2} by $z'$ and integrating from $r_1$ to $r$ for $r$ big enough yields
$$
\frac{1}{2}\log\frac{z'^2}{1+z'^2}+\log r+c_1>0,.
$$
for some integration constant $c_1$. After some manipulations we arrive to
$$
z'>\frac{c_2}{\sqrt{r^2-c_2^2}},\quad c_2=e^{-c_1}>0.
$$
Finally, integrating from $r_1$ to $r$ we obtain
$$
z(r)>c_2\,\mathrm{arctanh}\frac{r}{\sqrt{r^2-c_2^2}}+c_3,\quad  \ c_3\in\mathbb{R}.
$$
Letting $r\to\infty$, the right hand-side in this inequality diverges and thus   $z(r)\to \infty$ as $r\to\infty$. This it is not possible by Claim  4. 

After these arguments we ensure the existence of $r_2>r_1$ such that $z'(r_2)=0$. Note that $z(r_2)\neq1$ since otherwise, $\mathcal{B}(z_0)=\H_1$ by uniqueness in \eqref{rot2}: this is not possible because $z_0<1$.   Since $z''(r)\leq0$ for $r<r_2$ close enough to $r_2$ we conclude $z_2>1$. Then   $z''(r_2)<0$, which implies that $z(r)$ attains a local maximum at $r_2$. 

\item From the above claims, we deduce that the function $z(r)$   cannot end being a graph at some finite $r_3>r_2$ because the left-hand side of \eqref{rot2} would be $-\infty$ but the right-hand side is finite. Consequently, $z$ keeps being a graph and for $r>r_2$ small enough we have $z''(r),\ z'(r)<0$. We show that $z$ cannot keep this behavior. Otherwise, for $r\rightarrow r_3>r_2$ we would have $\lim_{r\rightarrow r_3}z(r)=0$ with $z',z''<0$. But this contradicts the Claim 4. Thus  $z(r)$ has to change its curvature, i.e. there exists $r_3>r_2$ such that $z''(r_3)=0$, for which $z(r_3)>1$ since $z'(r_3)<0$. At this point, the only possibilities for $z$ are the following:
\begin{enumerate}
\item $z(r)\rightarrow z_\infty>0$ as $r\rightarrow\infty$;
\item $z'(r_4)=0,\ z''(r_4)>0$ for some $r_4>r_3$.
\end{enumerate}
We prove that the latter is the one that holds. By contradiction, if the former holds, the following would occur
$$
\lim_{r\rightarrow\infty}z'(r)=\lim_{r\rightarrow\infty}z''(r)=0,
$$
which yields a contradiction after substituting in \eqref{rot2}. We conclude that necessarily $z'(r_4)=0$ at some $r_4>r_3$, where by a similar argument as in the case of the maximum we get that $z''(r_4)>0$, i.e. $z$ attains a local minimum at $r_4$, and $z(r_4)<1$ by Prop. \ref{mm}. 

At this point, we have a similar structure as when $z(r)$ started at the rotation axis with an orthogonal intersection at a local minimum of $z(r)$. Therefore, this process is repeated and we see that $z=z(r)$ is an entire graph that oscillates around the horosphere $\H_1$.
\end{enumerate}
 
\end{proof}

The last result of this section is devoted to show the geometric properties of   the spherical rotational horo-shrinkers that do not intersect the rotation axis. See Fig. \ref{fig4}.

\begin{theorem}\label{t5}
Let $\Sigma$ be a spherical rotational horo-shrinker about the $z$-axis such that $\Sigma$ does not intersect the rotation axis. Then $\Sigma$ belongs to a two-parameter family of spherical rotational horo-shrinkers, $\mathcal{W}(x_0,z_0)$, where the parameter $x_0\in (0,\infty)$ indicates the Euclidean distance of $\mathcal{W}(x_0,z_0)$ to the $z$-axis and $z_0$ is the Euclidean distance to $z=0$. Moreover:
\begin{enumerate}
\item The surfaces $\mathcal{W}(x_0,z_0)$ are bi-graphs on the $xy$-plane. 
\item Each $\mathcal{W}(x_0,z_0)$ is contained in the closure of the non-bounded domain determined by the  Euclidean cylinder about the $z$-axis and of radius $x_0$. 
\item Each $\mathcal{W}(x_0,z_0)$ has the topology of an annulus, and its ends oscillate around the horosphere $\H_1$.
\end{enumerate}
\end{theorem}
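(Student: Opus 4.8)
The plan is to read everything off the generating curve $\alpha(s)=(x(s),0,z(s))$, a solution of \eqref{spherical} with $x(s)>0$ throughout since $\Sigma$ misses the axis. First I would fix the parametrization by choosing the correct base point. By Prop. \ref{pr1} the function $x(s)$ has at most one critical point, and if present it is a minimum; hence $\alpha$ is either a monotone graph in $x$ or a bi-graph with a single closest point to the axis. I would discard the monotone case: a strictly monotone $x(s)$ would tend to $\inf x$ at one end of its domain, and this limit can be neither $0$---otherwise $\alpha$ reaches or asymptotically approaches the axis, which is incompatible with the blow-up of the term $-\sin\theta/x$ in \eqref{spherical} together with the non-intersection hypothesis---nor a positive value approached with vertical tangent, since the cylinder $x\equiv x_0$ is not a solution of \eqref{spherical}. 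Thus $x$ attains a unique global minimum, say at $s=0$, where $x'(0)=\cos\theta(0)=0$ and so $\theta(0)=\pm\pi/2$; the two signs are the same curve traversed in opposite directions ($s\mapsto-s$, $\theta\mapsto\theta+\pi$), so after normalizing $\theta(0)=\pi/2$ the surface is determined by $(x_0,z_0)=(x(0),z(0))$. This yields the family $\mathcal{W}(x_0,z_0)$, with $x_0$ the Euclidean distance to the axis and $z_0$ the height of the closest point.

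Properties (1) and (2) would then follow from this structure. Since $s=0$ is the unique critical point of $x$ and is a minimum, $x$ is strictly increasing for $s>0$ and strictly decreasing for $s<0$, whence $x(s)\geq x_0$ for every $s$; this confines $\mathcal{W}(x_0,z_0)$ to the closed exterior of the Euclidean cylinder of radius $x_0$, proving (2). The same monotonicity makes each of the two branches $s>0$ and $s<0$ a graph $z=z(x)$ over $x>x_0$, so after rotation $\Sigma$ is the union of two graphs over the exterior of the disk of radius $x_0$ meeting along the circle $x=x_0$, that is, a bi-graph over the $xy$-plane, which is (1).

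For the topology in (3) I would establish global existence. Writing each branch as $z=z(r)$, $r=x$, the profile solves \eqref{rot2}; multiplying by $z'$ and integrating produces a monotone energy identity analogous to \eqref{el}, whose positive left-hand side forces $z$ to stay bounded away from $0$ and from $\infty$, exactly as in Prop. \ref{proporbitaborde} and Claim 4 of Thm. \ref{t4}, while a companion estimate bounds $z'$ (equivalently $\theta$). With the right-hand side of \eqref{spherical} thus bounded, $\alpha$ is defined for all $s\in\r$, and since $x$ is strictly monotone with bounded derivative on each side of the minimum it diverges, so $x(s)\to\infty$ as $s\to\pm\infty$. The generating curve is therefore diffeomorphic to $\r$, and its rotation about the axis is topologically $\r\times\s^1$, an annulus.

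The oscillation of the two ends around $\H_1$ is the heart of the matter, and I would obtain it by transplanting the argument of Thm. \ref{t4}. The key point is that as $r=x\to\infty$ the term $z'/r$ in \eqref{rot2} becomes negligible, so each end is governed asymptotically by the grim-reaper profile equation $z''/(1+z'^2)=2(1-z)/z^2$, whose solutions are periodic about $z=1$ by Thm. \ref{t2}. Reproducing Claims 1, 6 and 7 of Thm. \ref{t4}, I would show that $z$ cannot be eventually monotone (this contradicts the boundedness of $z$ through \eqref{rot2}), that the sign of $z''$ at a critical point forces $z>1$ at maxima and $z<1$ at minima, and that $z$ cannot settle at the value $1$. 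Hence $z-1$ changes sign infinitely often along each branch, which is precisely the assertion that both ends oscillate around $\H_1$. The hard part will be this last step: because the damping $z'/r$ vanishes in the limit the energy identity degenerates, and one must verify that the oscillation amplitude does not collapse and that the crossings of $z=1$ genuinely persist to infinity, rather than the profile converging monotonically to $\H_1$.
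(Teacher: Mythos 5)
Your proposal is correct and follows essentially the same route as the paper's proof: locate the closest point to the axis, where the tangent is vertical, so that the surface is parametrized by $(x_0,z_0)$ and splits into two graphical branches $z_\pm(r)$ satisfying the same ODE as in Thm. \ref{t4}, whose oscillation argument (in particular Claim 6, which rules out eventual monotone convergence to $z=1$) is then transplanted to each branch. The only differences are that you additionally argue the monotone case for $x(s)$ cannot occur---a completeness point the paper sidesteps by directly constructing solutions of \eqref{rot3} with $x(z_0)=x_0$, $x'(z_0)=0$ and local convexity $x''(z_0)=1/x_0>0$---and that your self-flagged worry about crossings of $z=1$ persisting is exactly what Claim 6 of Thm. \ref{t4} settles (only the stronger statement of convergence to $\H_1$ is left open, by the paper as well), so it is not a genuine obstruction.
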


\begin{proof}
Let us write Eq. \eqref{eq3} considering that the generating curve $\alpha$ is a graph $x=x(r)$ on the $z$-axis, $z>0$. Then $x(r)$ satisfies
\begin{equation}\label{rot3}
x''=\frac{(r^2+2(r-1)xx')(1+x'^2)}{r^2x^2}.
\end{equation}
For $x_0,z_0>0$, let $x=x(r)$ be the solution of \eqref{rot3} with initial conditions  $x(z_0)=x_0$, $x'(z_0)=0$. Since $x''(z_0)=1/x_0$, the function $x(r)$ is   strictly convex locally around $r_0$. This curve generates a spherical rotational horo-shrinker which will be denoted by $\mathcal{W}(x_0,z_0)$. Then  $\mathcal{W}(x_0,z_0)$ starts as a  bi-graph over the $xy$-plane around $(x_0,z_0)$. Let  $\mathcal{W}_+(x_0,z_0)$ denote the  upper graphical component, which is the graph of a function $z=z_+(r)$. Similarly, its lower graphical component is denoted by $\mathcal{W}_-(x_0,z_0)$ and it is the graph of a function  $z=z_-(r)$.

\begin{figure}[h]
\begin{center}
\includegraphics[width=.4\textwidth]{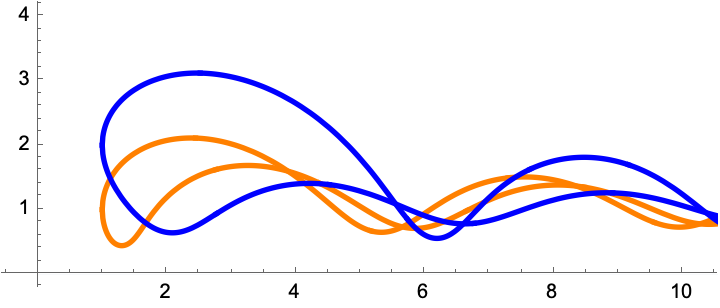}
\end{center}
\caption{Generating curves of spherical rotational horo-shrinkers which do no intersect the rotational axis. In orange, the initial condition is $(x_0,z_0)=(1,1)$. In blue, the initial condition is $(x_0,z_0)=(1,2)$.}
\label{fig4}
\end{figure}

Let us analyze the behavior of $\mathcal{W}_+(x_0,z_0)$, that is, of the function $z_{+}(r)$: for $z_{-}(r)$ the arguments   are analogous. The function $z_+$ satisfies $z_+(x_0)=z_0$, $z_+'(x_0)=\infty$ and for $r>x_0$ close enough to $x_0$ we have $z_+'(r)>0$ and $z_+''(r)<0$. At this point, we follow similar ideas that the ones developed in the proof of Thm. \ref{t4}, so the graph $z_+$ must attain a local maximum, decrease, change its curvature and then attain a local minimum. This process is repeated proving that $z_+(r)$ oscillates around $z=1$ as $r\to\infty$.  
\end{proof}

\begin{remark}
The surfaces $\mathcal{B}(z_0)$ and $\mathcal{W}(x_0,z_0)$ of  Thms. \ref{t4} and \ref{t5} can be thought as the analogous to the bowl soliton and the wing-like examples of translators in the theory of the mean curvature flow in $\r^3$. Up to a translation of $\r^3$, the bowl soliton is the unique translator in $\r^3$ intersecting orthogonally the rotation axis, while the wing-like examples form  a one-parameter family of annuli, parametrized in terms of the distance to the rotation axis. In contrast, the situation for horo-shrinkers is a bit different. If the rotation axis is the $z$-axis (such as it has been considered in this section),   the hyperbolic translations of $\h^3$ from the origin $O$ (Euclidean homotheties)   do not preserve  equation \eqref{eq1}. Therefore, two horo-shrinkers $\mathcal{B}(z_0)$ and $\mathcal{B}(z_1)$, $z_0\not=z_1$, do not coincide by a hyperbolic translation of $\h^3$. For this reason, the family of surfaces $\mathcal{B}(z_0)$ is one-parametric. Similarly, the family   $\mathcal{W}(x_0,z_0)$ is two-parametric.  
\end{remark}

We end this paper with the following observation. Using  Mathematica, it is possible to observe that the surfaces $\mathcal{B}(z_0)$ and $\mathcal{W}(x_0,z_0)$ not only oscillate around $\H_1$ but they converge to it at infinity. However,   the authors have not been able to prove this convergence. The difficulty is that if we project the  system \eqref{spherical} on the $(z,\theta)$-plane, the $2$-dimensional system is not autonomous by the presence of $x$. Or equivalently, the system  \eqref{sys}  is  non-autonomous. Anyway, it is important to point out that $(z,\theta)=(1,0)$ (resp. $(z,z')=(1,0)$) is an equilibrium point of  \eqref{spherical} (resp. \eqref{sys})    regardless of the value of $x$  (resp. of $r$). This equilibrium point corresponds to the horosphere $\H_1$.

\end{document}